\newtheoremstyle{theorem}
     {11pt}
     {11pt}
     {}
     {}
     {\bfseries}
     {}
     {.5em}
     {\noindent\thmnumber{#2}. \thmname{#1} \thmnote{(#3)}}
\theoremstyle{theorem}
\newtheorem{thm}{Theorem}
\newtheorem{lemma}[thm]{Lemma}
\newtheorem{propo}[thm]{Proposition}
\newtheorem{coro}[thm]{Corollary}
\newtheorem{ques}[thm]{Question}
\newcommand{\R}{\mathbb{R}}
\newcommand{\cl}[2][X]{\mathrm{cl}_{#1}\!\left(#2\right)}
\newcommand{\ZFC}{\textsf{ZFC}}
\begin{document}

\title[Functional countability vs exponential separability]{Comparing functional countability and exponential separability}

\author[R. Hernández-Gutiérrez]{Rodrigo Hernández-Gutiérrez\orcidlink{0000-0002-5949-0871}}
\address[R. Hernández-Gutiérrez]{Departamento de Matemáticas, Universidad Autónoma Metropolitana campus Iztapalapa, Av. San Rafael Atlixco 186, Leyes de Reforma 1a Sección, Iztapalapa, 09310, Mexico city, Mexico}
\email[R. Hernández-Gutiérrez]{rod@xanum.uam.mx}

\author[S. Spadaro]{Santi Spadaro\orcidlink{0000-0001-5880-8799}}
\address[S. Spadaro]{Department of Engineering \\
University of Palermo \\
viale delle Scienze 8 \\
90128 Palermo, Italy}
\email{santidomenico.spadaro@unipa.it}

\makeatletter
\@namedef{subjclassname@2020}{\textup{2020} Mathematics Subject Classification}
\makeatother
\subjclass{54C30, 54F05, 54C35, 54D65.}
\keywords{functionally countable, exponentially separable, generalized ordered space, Souslin line, $G_\delta$-topology, Corson compactum, Eberlein compactum, function space, Ostaszewski space}

\begin{abstract}
A space is functionally countable if every real-valued continuous function has countable image. A stronger property recently defined by Tkachuk is exponential separability. We start by studying these properties in GO spaces, where we extend results by Tkachuk and Wilson, and prove a conjecture by Dow. We also study some subspaces of products that are functionally countable and the influence of the $G_\delta$-topology on exponential separability. Finally, we give some examples of functionally countable spaces that are separable and uncountable.
\end{abstract}

\maketitle 
\section{Introduction}

All spaces under discussion will be assumed to be Hausdorff.

Let $X$ be a topological space. Given $\mathcal{F}\subset\wp(X)$, a set $A\subset X$ is \emph{strongly dense} in $\mathcal{F}$ if $A\cap\left(\bigcap\mathcal{G}\right)\neq\emptyset$ whenever $\mathcal{G}\subset\mathcal{F}$ and $\bigcap G\neq\emptyset$. $X$ is \emph{exponentially separable} if, for every for countable family $\mathcal{F}$ of closed sets, there is countable subset $A$ of $X$ that is strongly dense in $\mathcal{F}$.

Exponentially separable spaces are a subclass of \emph{functionally countable spaces}, that is, spaces whose every continuous real-valued image is countable. Indeed, functionally countable spaces were characterized by Tkachuk in \cite[Theorem 3.8]{tka-nice_subclass} as those spaces which satisfy the definition of exponential separability when $\mathcal{F}$ is restricted to families of zero-sets. Moreover, these two notions coincide in a few notable cases, like perfectly normal (\cite[Corollary 3.9]{tka-nice_subclass}) and countably compact normal spaces (\cite[Theorem 3.30]{tka-nice_subclass}).

In this paper, we study these two classes of spaces. Our main results can be summarized as follows:
\begin{enumerate}[label=(\arabic*)]
\item We characterize the scattered GO spaces and the locally countable GO spaces that are exponentially separable  (Corollary \ref{coro:GO_exp_sep}); this extends a previous characterization by Tkachuk and Wilson (\cite{tka_wil-fc_GO}).
\item We show that for Souslin lines functional countability is equivalent to exponential separability (Proposition \ref{propo:souslin}).
\item We prove a conjecture of Alan Dow's by showing that any Souslin line defined by lexicographically ordering a normal Souslin tree is functionally countable (Theorem \ref{thm:souslin_line_fc}).
\item We give an example of a $\sigma$-closed discrete space that is functionally countable but not exponentially separable because it has arbitrarily large extent (Theorem \ref{uspenskij-thm}).
\item We give an example of an uncountable Tychonoff space which is separable and exponentially separable (Theorem \ref{ex:unctble_sep_exp_sep}).
\item We prove that any Ostaszewki space is an example of an uncountable, perfectly normal, hereditarily separable and exponentially separable space (Theorem \ref{thm:ostaszewski}).
\end{enumerate}

\section{GO spaces}

Recall that a LOTS (linearly ordered topological space) is a space such that its topology is the order topology defined by a linear order. A GO (generalized ordered) space can be defined as a space that can be embedded as a subspace of a LOTS. In this section, we study some classes of GO spaces which are known to be functionally countable, and prove that they are in fact exponentially separable. 

We start by stating the following two facts about subspaces of ordinals that have countable extent.

\begin{lemma}\label{lemma:char-ctble-ext-ordinals}
Let $\lambda$ be an ordinal and $X\subset\lambda+1$. Then the following are equivalent
\begin{enumerate}[label=(\roman*)]
    \item $X$ has countable extent, and
    \item for every $\xi\leq\lambda$ with $\mathsf{cof}(\xi)=\omega_1$, either
    \begin{enumerate}
        \item there is $\eta<\xi$ such that $X\cap \xi=X\cap\eta$ or
        \item if $X\cap\xi$ is not stationary, then $\xi\in X$.
    \end{enumerate}
\end{enumerate}
\end{lemma}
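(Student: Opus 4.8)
The plan is to prove both implications in contrapositive form, each time translating the failure of (ii) into the presence — or absence — of an uncountable closed discrete subspace of $X$. First unwind (a) and (b): condition (ii) fails exactly when there is an ordinal $\xi\le\lambda$ with $\mathsf{cof}(\xi)=\omega_1$ such that $X\cap\xi$ is cofinal in $\xi$ (failure of (a)), while $X\cap\xi$ is non-stationary in $\xi$ and $\xi\notin X$ (failure of (b)). So it suffices to show that such a $\xi$ exists if and only if $X$ has uncountable extent.

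\emph{From a bad $\xi$ to uncountable extent.} Given $\xi$ as above, use non-stationarity of $X\cap\xi$ together with $\mathsf{cof}(\xi)=\omega_1$ to fix an increasing continuous sequence $\langle c_\alpha:\alpha<\omega_1\rangle$, cofinal in $\xi$, whose range $C$ is disjoint from $X$ (legitimate, since $C\subseteq\xi$, so being disjoint from $X\cap\xi$ is the same as being disjoint from $X$). Because $X\cap\xi$ is cofinal in $\xi$ and misses $C$, the set $A=\{\alpha<\omega_1: X\cap(c_\alpha,c_{\alpha+1})\ne\emptyset\}$ is cofinal in $\omega_1$; choose $d_\alpha\in X\cap(c_\alpha,c_{\alpha+1})$ for $\alpha\in A$ and set $D=\{d_\alpha:\alpha\in A\}$. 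Then $D$ is uncountable and discrete, since the intervals $(c_\alpha,c_{\alpha+1})$ are pairwise disjoint open subsets of $X$, each meeting $D$ in exactly one point. The delicate point is that $D$ is closed in $X$: one checks that every limit point of $D$ in $\lambda+1$ is either $\xi$ or some $c_\alpha$ with $\alpha$ a limit ordinal, and all of these lie outside $X$ (by the hypothesis $\xi\notin X$, respectively by $C\cap X=\emptyset$), so $\mathrm{cl}_X(D)=\mathrm{cl}_{\lambda+1}(D)\cap X=D$.

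\emph{From uncountable extent to a bad $\xi$.} Let $D\subseteq X$ be uncountable, closed and discrete; discarding $\max D$ if it exists keeps $D$ closed, discrete and uncountable, so we may assume $\sup D\notin D$. Let $\xi$ be least with $D\cap\xi$ uncountable. Minimality forces $\xi$ to be a limit ordinal, $D\cap\xi$ to be cofinal in $\xi$ of order type exactly $\omega_1$, hence $\mathsf{cof}(\xi)=\omega_1$ and $\xi\le\lambda$. Since $D\cap\xi$ is cofinal in $\xi$ and $\mathsf{cof}(\xi)>\omega$, the ordinal $\xi$ is a limit point of $D$ in $\lambda+1$; if $\xi\in X$ it would lie in $\mathrm{cl}_X(D)=D$ while being a limit point of $D$, contradicting discreteness, so $\xi\notin X$. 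Consequently $X\cap\xi$, and therefore $D\cap\xi=D\cap(X\cap\xi)$, is closed in $X$; in particular $X\cap\xi\supseteq D\cap\xi$ is cofinal in $\xi$, so (a) fails at $\xi$. Finally, the set $C$ of limit ordinals $\gamma<\xi$ that are limit points of $D\cap\xi$ is a club in $\xi$ (closure is routine; unboundedness uses $\mathsf{cof}(\xi)>\omega$ once more), and any $\gamma\in C\cap X$ would again lie in $\mathrm{cl}_X(D)=D$ while being a limit point of $D$, which is impossible; thus $C\cap(X\cap\xi)=\emptyset$ and $X\cap\xi$ is non-stationary. Hence (b) fails at $\xi$ as well, and (ii) fails.

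The main obstacle, in both directions, is the same bookkeeping about closures: in the forward construction one must verify that the set $D$ extracted from the gaps between consecutive $c_\alpha$ picks up no limit points inside $X$, and in the reverse direction one must exploit the simple but essential observation that a closed discrete subset of $X$ can have none of its $\lambda+1$-limit points lying in $X$. Everything else reduces to standard manipulations with cofinality and stationary sets.
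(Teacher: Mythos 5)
Your proof is correct and follows essentially the same route as the paper's: in one direction you interleave points of $X$ between consecutive elements of a club missing $X\cap\xi$ to build an uncountable closed discrete set whose limit points all land in $C\cup\{\xi\}\subseteq(\lambda+1)\setminus X$, and in the other you take the derived set of a closed discrete set of order type $\omega_1$ as a club witnessing that $X\cap\xi$ is non-stationary while $\xi\notin X$. The only blemish is the preliminary step of discarding $\max D$, which as stated does not by itself guarantee $\sup D\notin D$ and is in any case never used, since the minimality of $\xi$ does all the work.
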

\begin{proof}
First, we prove that $\neg(ii)$ implies $\neg(i)$. Let $\xi\leq\lambda$ be such that $X\cap\xi$ is unbounded nonstationary in $\xi$, $\mathsf{cof}(\xi)=\omega_1$ but $\xi\notin X$. Let $C\subset\xi$ be a club such that $C\cap X=\emptyset$. We may assume that $C$ is of order type $\omega_1$. Recursively choose an increasing sequence $S=\{x_\alpha\colon\alpha<\omega_1\}\subset X$ such that $(x_{\alpha},x_{\alpha+1})\cap C\neq\emptyset$ for all $\alpha<\omega_1$. Any limit point of $S$ is contained in $\overline{C}$ and $\overline{C}=C\cup\{\xi\}$. Since $(C\cup \{\xi\})\cap X=\emptyset$, then $S$ is discrete and uncountable in $X$.

Now, we prove that $\neg(i)$ implies $\neg(ii)$. Let $S$ be an uncountable closed discrete subset of $X$; we may assume that $S$ is of order type $\omega_1$. Let $\xi=\sup S$, then $\mathsf{cof}(\xi)=\omega_1$. The set $F=S'$ of limit points of $S$ is a closed unbounded subset of $\xi$ which misses $X\cap\xi$. Thus, $X\cap\xi$ is nonstationary in $\xi$ but $\xi\notin X$. 
\end{proof}

\begin{lemma}\label{lemma:ctle-ext-ordinals-unctable-cof}
Let $\lambda$ be an ordinal and let $X\subset\lambda+1$ have countable extent. Assume that $\xi\leq\lambda$ is a limit ordinal such that $\mathsf{cof}(\xi)>\omega_1$ and $\sup(X\cap\xi)=\xi$. Then $X\cap\xi$ is stationary in $\xi$.
\end{lemma}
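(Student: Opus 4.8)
The plan is to prove the contrapositive by contradiction, in the same spirit as the ``$\neg(ii)\Rightarrow\neg(i)$'' half of the proof of Lemma~\ref{lemma:char-ctble-ext-ordinals}: assuming $X\cap\xi$ is not stationary in $\xi$, I will construct an uncountable closed discrete subspace of $X$, contradicting countable extent. The new feature compared with that lemma is that the hypothesis $\mathsf{cof}(\xi)>\omega_1$ forces the whole construction to stay strictly below $\xi$, and this is exactly what makes the argument succeed.

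So fix a club $C\subseteq\xi$ with $C\cap X=\emptyset$ (equivalently $C\cap(X\cap\xi)=\emptyset$, since $C\subseteq\xi$). Using that both $X\cap\xi$ and $C$ are cofinal in $\xi$, I recursively choose strictly increasing sequences $\{x_\alpha\colon\alpha<\omega_1\}\subseteq X\cap\xi$ and $\{c_\alpha\colon\alpha<\omega_1\}\subseteq C$ that interleave, i.e.\ $x_\alpha<c_\alpha<x_{\alpha+1}$ for every $\alpha<\omega_1$. This is possible because at stage $\alpha$ the ordinals chosen so far form a set of cardinality at most $\omega_1<\mathsf{cof}(\xi)$, hence bounded in $\xi$ by some $\mu_\alpha<\xi$; then cofinality of $X\cap\xi$ yields $x_\alpha\in X\cap\xi$ above $\mu_\alpha$, and cofinality of $C$ yields $c_\alpha\in C$ above $x_\alpha$.

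Set $S=\{x_\alpha\colon\alpha<\omega_1\}$, an uncountable subset of $X$. A routine analysis of increasing sequences of ordinals shows that every limit point of $S$ in $\lambda+1$ is of the form $y_\delta:=\sup_{\alpha<\delta}x_\alpha=\sup_{\alpha<\delta}c_\alpha$ for some limit ordinal $\delta\le\omega_1$, the two suprema agreeing because of the interleaving. For each such $\delta$ the set $\{c_\alpha\colon\alpha<\delta\}$ has cardinality at most $\omega_1<\mathsf{cof}(\xi)$, so $y_\delta<\xi$; moreover $y_\delta$ is a limit point of $C$ lying below $\xi$, hence $y_\delta\in C$ because the club $C$ is closed in $\xi$. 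Since $C\cap X=\emptyset$, this gives $y_\delta\notin X$. Therefore $S$ has no limit point in $X$, so $S$ is closed and discrete in $X$; as $|S|=\omega_1$, this contradicts the countable extent of $X$, and we conclude that $X\cap\xi$ is stationary in $\xi$.

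The argument is essentially bookkeeping, and I do not anticipate a genuine obstacle. The only points needing a little care are that the recursion never runs out of room and that $S$ is honestly discrete in $X$ (no $x_\alpha$ equals a $y_\delta$, since the former lie in $X$ and the latter do not), and both reduce to the single observation that every subset of $\xi$ of size $\le\omega_1$ is bounded, which is exactly the hypothesis $\mathsf{cof}(\xi)>\omega_1$.
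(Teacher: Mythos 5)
Your proof is correct and is essentially the same argument the paper gives: assume non-stationarity, take a club $C$ disjoint from $X$, build an increasing $\omega_1$-sequence in $X\cap\xi$ interleaved with points of $C$, and use $\mathsf{cof}(\xi)>\omega_1$ to keep everything bounded below $\xi$ so that all limit points land in $C$ and the sequence is closed discrete. Your write-up just makes the interleaving and the treatment of the supremum more explicit than the paper's terser version.
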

\begin{proof}
If $X\cap\xi$ were not stationary in $\xi$, then there would be a closed unbounded set $C \subset \xi$ such that $C \cap X=\emptyset$. It is possible to construct a strictly increasing sequence $S=\{x_\alpha: \alpha < \omega_1\} \subset X$ such that $(x_\alpha, x_{\alpha+1}) \cap C \neq \emptyset$ for all $\alpha<\omega_1$. Since $\mathsf{cof}(\xi)>\omega_1$, $S$ is bounded in $\xi$. Then all limit points of $S$ would be inside $C$ and therefore $S$ would be a closed discrete subset of $X$, contradicting countable extent.
\end{proof}

Next, we show that subspaces of ordinals with countable extent are exponentially separable and then generalize this result to scattered GO spaces. 

\begin{thm}\label{thm:subspaces-ordinals}
    Let $X$ be a space of countable extent such that $X \subset \lambda+1$, for some ordinal $\lambda$. Then $X$ is exponentially separable.
\end{thm}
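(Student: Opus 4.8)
The plan is to argue by transfinite induction on $\lambda$, after verifying two routine permanence properties that serve as decomposition tools: a finite union of exponentially separable subspaces is exponentially separable, and a topological sum of countably many exponentially separable spaces is exponentially separable. Both are proved the same way --- given a countable family $\mathcal F$ of closed sets in the whole space, restrict it to each piece $Y$, choose inside $Y$ a countable set strongly dense in $\{F\cap Y:F\in\mathcal F\}$, and check that the union of these sets is strongly dense in $\mathcal F$, using that every nonempty $\bigcap\mathcal G$ meets some piece and that intersection commutes with restriction to $Y$.

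In the induction, $\lambda=0$ and $\lambda$ a successor are immediate: the top point of $\lambda+1$ is clopen, so split it off and apply the inductive hypothesis to the remainder, which lies inside a strictly smaller ordinal. If $\mathsf{cof}(\lambda)=\omega$, I would fix strictly increasing successor ordinals $0=\lambda_0<\lambda_1<\cdots$ cofinal in $\lambda$; each $[\lambda_n,\lambda_{n+1})$ is clopen in $\lambda$, so $X\cap\lambda$ is the topological sum of the subspaces $X\cap[\lambda_n,\lambda_{n+1})$, and each of these is homeomorphic to a subspace of an ordinal below $\lambda$, hence exponentially separable by induction; then $X=(X\cap\lambda)\cup(X\cap\{\lambda\})$ is a finite union of exponentially separable subspaces.

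The substantial case is $\mathsf{cof}(\lambda)\ge\omega_1$. If $\sup(X\cap\lambda)<\lambda$ one again reduces to the inductive hypothesis plus a point, so assume $\sup(X\cap\lambda)=\lambda$. Let $\mathcal F=\{F_n:n\in\omega\}$ be a countable family of closed sets and put $N_b=\{n:\sup F_n<\lambda\}$; since $\mathsf{cof}(\lambda)>\omega$ there is a successor ordinal $\mu<\lambda$ with $\sup F_n<\mu$ for all $n\in N_b$, and I set $X_1=X\cap[0,\mu)$, $X_2=X\cap[\mu,\lambda]$, a partition of $X$ into two clopen pieces. Then $X_1$ is a countable-extent subspace of an ordinal below $\lambda$, so by induction it is exponentially separable and I may fix $A_1\subseteq X_1$ countable and strongly dense in $\{F\cap X_1:F\in\mathcal F\}$; on the other hand $F_n\cap X_2=\emptyset$ whenever $n\in N_b$. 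The crucial point, and where Lemmas~\ref{lemma:char-ctble-ext-ordinals} and \ref{lemma:ctle-ext-ordinals-unctable-cof} enter, is that $\bigcap_{n\notin N_b}(F_n\cap X_2)$ is nonempty. If $\lambda\in X$, this is because each $F_n$ with $n\notin N_b$ has $\sup F_n=\lambda$ and is closed in $X$, so it must contain $\lambda\in X_2$. If $\lambda\notin X$, then $X=X\cap\lambda$ is cofinal in $\lambda$, hence stationary in $\lambda$ by Lemma~\ref{lemma:char-ctble-ext-ordinals} when $\mathsf{cof}(\lambda)=\omega_1$ or by Lemma~\ref{lemma:ctle-ext-ordinals-unctable-cof} when $\mathsf{cof}(\lambda)>\omega_1$; each $F_n$ with $n\notin N_b$ is then cofinal in $\lambda$, so its closure $C_n$ in $\lambda$ is a club with $C_n\cap X=F_n$, the club $C=\bigcap_{n\notin N_b}C_n$ meets the stationary set $X\cap[\mu,\lambda)$, and any point of $X\cap[\mu,\lambda)\cap C$ lies in $F_n$ for every $n\notin N_b$. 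Fixing $p$ in this intersection, $\{p\}$ is strongly dense in $\{F_n\cap X_2:n\in\omega\}$ --- any subfamily of $\mathcal F$ whose intersection meets $X_2$ omits all $F_n$ with $n\in N_b$, so its intersection contains $p$ --- and therefore $A=A_1\cup\{p\}$ is countable and strongly dense in $\mathcal F$: a nonempty $\bigcap\mathcal G$ meets $X_1$, where it is caught by $A_1$, or meets $X_2$, where it is caught by $p$.

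I expect the main obstacle to be precisely this last case: seeing that the ``unbounded'' closed sets among the $F_n$ can all be hit by a \emph{single} point of $X_2$ --- forced by $\lambda\in X$ in one subcase, and by stationarity of $X\cap\lambda$ (which is exactly the content of the two lemmas) in the other --- while the ``bounded'' ones are confined to the clopen piece $X_1$ governed by induction. Everything else --- the two permanence lemmas, the degenerate subcases $N_b=\emptyset$ and $X_1=\emptyset$, and the fact that a stationary set remains stationary after deleting a bounded set --- should be routine verification.
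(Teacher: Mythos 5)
Your proof is correct and follows essentially the same route as the paper's: induction on initial segments, a case split on the cofinality of $\lambda$ and on whether $\lambda\in X$, separating the bounded closed sets (handled by the inductive hypothesis below a suitable ordinal) from the unbounded ones (all hit by a single point obtained from the intersection of their club closures meeting the stationary set $X\cap\lambda$, which is exactly what Lemmas \ref{lemma:char-ctble-ext-ordinals} and \ref{lemma:ctle-ext-ordinals-unctable-cof} provide). The only cosmetic difference is that the paper recurses at the very point $\chi$ found in the club intersection, whereas you split at a separate cut point $\mu$ and adjoin the point $p$ afterwards.
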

\begin{proof}

Define $X_\alpha=X \cap (\alpha+1)$, for every $\alpha \leq \lambda$. Inductively we prove that $X_\alpha$ is exponentially separable for all $\alpha\leq\lambda$.

If $\alpha\leq\min X$ then $X_\alpha$ clearly is exponentially separable. Now, suppose we have proved that $X_\beta$ is exponentially separable, for every $\beta<\gamma$ and some $\gamma\leq \lambda$.

If $\gamma$ is a successor ordinal or $\gamma$ has countable cofinality, then $X_\gamma$ is exponentially separable, since it is a countable union of exponentially separable spaces (see \cite[Proposition 3.2 (d)]{tka-nice_subclass}). 

It remains to argue what happens if $\gamma$ has uncountable cofinality. If there exists $\alpha<\gamma$ such that $X \cap \gamma\subset\alpha+1$, then $X_\gamma=X_\alpha$ or $X_\gamma=X_\alpha\cup\{\gamma\}$. In either case, $X_\gamma$ is exponentially separable by the induction hypothesis.  Now, assume that $X \cap \gamma$ is unbounded in $\gamma$. Let $\{F_n: n < \omega \}$ be a sequence of closed subsets of $X_\gamma$. Let's consider two cases.

\begin{enumerate}[label={\roman*.}]

\item $\gamma\in X$. Let $A=\{n<\omega\colon \gamma\in F_n\}$ and $B=\omega\setminus A$. Notice that there exists $\alpha<\gamma$ such that $F_n\subset\alpha+1$ for each $n\in B$. Since we are inductively assuming that $X_\alpha$ is exponentially separable, we know that there exists a countable set $D\subset X_\alpha$ which is strongly dense in $\{F_n\cap (\alpha+1)\colon n\in\omega\}$. Then $C=D \cup \{\gamma\}$ is a countable set which is strongly dense in $\{F_n: n < \omega \}$, which proves that $X_\gamma$ is exponentially separable.

\item $\gamma\notin X$. By Lemma \ref{lemma:char-ctble-ext-ordinals} or Lemma \ref{lemma:ctle-ext-ordinals-unctable-cof} we know that $X\cap\gamma$ is a stationary subset of $\gamma$. In this case, let $A=\{n<\omega\colon \sup(F_n\cap\gamma)=\gamma\}$ and $B=\omega\setminus A$. Notice that $\{\cl[\gamma]{F_n\cap\gamma}\colon n\in A\}$ is collection of clubs in $\gamma$, so the intersection of this collection and $X$ is nonempty. Thus, there exists
$$\chi\in (X\cap\gamma)\cap\left(\bigcap\{F_n\colon n\in A\}\right).$$ Since all elements of $\{F_n\colon n\in B\}$ are bounded in $\gamma$, we may assume that $\chi$ is such that $F_n\subset\chi+1$ for all $n\in B$. Since we are inductively assuming that $X_\chi$ is exponentially separable, it follows that there is a countable set $C\subset X_\chi$ which is strongly dense in $\{F_n\cap (\chi+1)\colon n<\omega\}$. It can be easily proved that $C$ is also strongly dense in $\{F_n\colon n<\omega\}$.
\end{enumerate}
\end{proof}

\begin{thm}
Let $X$ be a scattered GO space of countable extent. Then $X$ is exponentially separable.
\end{thm}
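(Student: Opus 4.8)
The plan is to reduce the theorem to Theorem~\ref{thm:subspaces-ordinals} via the structural fact that \emph{every scattered GO space $X$ of countable extent is the union of countably many closed subspaces, each homeomorphic to a subspace of an ordinal}. Granting this, $X$ is exponentially separable: each piece is, by Theorem~\ref{thm:subspaces-ordinals}, and exponential separability is inherited by closed subspaces and preserved under countable unions of closed subspaces (\cite[Proposition~3.2]{tka-nice_subclass}).

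I would prove the structural fact by transfinite induction on the Cantor--Bendixson height $\mathrm{ht}(X)$. If $\mathrm{ht}(X)\le 1$ then $X$ is discrete, hence countable by countable extent, hence a single subspace of an ordinal. For the inductive step write $X=I\cup X'$, with $I$ the open set of isolated points and $X'=X\setminus I$ the derived set; then $X'$ is a closed scattered GO subspace of countable extent with $\mathrm{ht}(X')<\mathrm{ht}(X)$, so by induction $X'=\bigcup_{n<\omega}Y_n$ with each $Y_n$ closed in $X$ and homeomorphic to a subspace of an ordinal. It remains to absorb $I$. Decompose $I$ into its maximal runs of consecutive isolated points; each such run $R$ is a discrete convex subspace whose boundary $\partial R$ lies in $X'$ and has at most two points. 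For each run, split it (at one point) if its two boundary points belong to different $Y_n$'s, and split off its part not cofinal at either boundary point (which, being closed discrete in the closure of the run, is countable); then attach each resulting piece to a $Y_n$ containing the relevant boundary point, obtaining sets $Y_n'$. One checks that $Y_n'$ is still closed in $X$ (using that $Y_n$ is closed in $X'$, so that the accumulation in $X'$ of the runs attached to $Y_n$ stays inside $Y_n$) and still homeomorphic to a subspace of an ordinal (the attached pieces are discrete and glued at points of $Y_n$ without producing the kind of two-sided limit that obstructs ordinal-embeddability --- cf.\ $\omega_1+1+\omega_1^{*}$ --- so one embeds $Y_n'$ by suitably ``blowing up'' those points inside a larger ordinal). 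Then $X=\bigcup_{n<\omega}Y_n'$ is the required decomposition.

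The heart of the matter --- and the step I expect to cost the most work --- is the verification in the inductive step that the (possibly uncountably many) runs can be absorbed into countably many closed subspaces-of-ordinals. This is exactly where countable extent is used: it forces each run with two boundary points in $X'$ to be, up to a countable remainder, the union of two ``one-sided'' discrete sets converging to those points, so that no two-sided limit of uncountable character is created; and it guarantees that the runs accumulating at a point of $X'$ do so only within a neighbourhood already controlled by the piece $Y_n$ containing that point, so that the reattached sets remain closed. Carrying out this splitting-and-reattaching carefully, and checking that ordinal-embeddability survives each gluing, is the delicate technical core; everything else then follows from Theorem~\ref{thm:subspaces-ordinals} and the closure properties of exponential separability. (Alternatively, one can avoid the structural fact and argue by a direct transfinite induction on $\mathrm{ht}(X)$, peeling off the isolated points and showing, again using countable extent, that the ``$\mathcal{F}$-types'' realized by isolated points admit a countable cofinal subfamily.)
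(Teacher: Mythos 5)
Your plan --- reduce to Theorem \ref{thm:subspaces-ordinals} by decomposing $X$ into countably many closed pieces each homeomorphic to a subspace of an ordinal --- is a genuinely different route from the paper's, which instead quotes Purisch's theorems that a scattered GO space is orderable and is a closed, continuous, $\le 2$-to-one image of a subspace of an ordinal, checks that the preimage inherits countable extent, and finishes with Theorem \ref{thm:subspaces-ordinals} plus preservation of exponential separability under continuous images. However, your sketch has two genuine gaps. The first is that the induction is not well-founded: since $(X')^{(\alpha)}=X^{(1+\alpha)}$, if $\mathrm{ht}(X)$ is a limit ordinal then $\mathrm{ht}(X')=\mathrm{ht}(X)$, so the inductive step never gets traction at limit heights, and the obvious repair (writing $X$ as the union of its Cantor--Bendixson layers) produces too many pieces, none of them closed.

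The second, more serious gap is that the reattachment step rests on a false premise about what obstructs ordinal-embeddability. The obstruction is not only the two-sided limit of $\omega_1+1+\omega_1^*$: if $a\in Y_n$ is the limit of a subspace $A\subset Y_n$ with $A\cup\{a\}\cong\omega_1+1$, and you attach to $a$ an uncountable \emph{discrete} half-run $B$ converging to $a$ (even from one side only), then $\{a\}\cup A\cup B$ embeds in no ordinal. Indeed, in any embedding $h$, all but countably many points of $h[A]$ and of $h[B]$ lie below $h(a)$ and are cofinal there, so $\mathrm{cof}(h(a))=\omega_1$; the suprema of increasing $\omega$-sequences from $h[A]$ form a club of $h(a)$ contained in $h[A]$ (by countable compactness of $\omega_1$), the suprema of increasing $\omega$-sequences from $h[B]$ form another club, and a common point of the two clubs would be a point of $A$ that is an accumulation point of $B$ --- impossible, since $B$ accumulates only at $a$. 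A concrete instance: let $X'=\{c_\alpha\colon\alpha\le\omega_1\}$ carry the order topology of $\omega_1+1$ and attach beyond $c_{\omega_1}$ a single run $\{d_\beta\colon\beta<\omega_1\}$ of isolated points coinitial at $c_{\omega_1}$. This is a scattered GO space of countable extent, your recipe yields the single piece $Y_0'=X$, and $X$ does not embed in any ordinal. The structural fact itself may still be true (here $X$ splits into $\{c_\alpha\colon\alpha\le\omega_1\}$ and $\{c_{\omega_1}\}\cup\{d_\beta\colon\beta<\omega_1\}$), but the bookkeeping must then place the attached runs in \emph{new} pieces, and the same obstruction recurs one level up when the attachment points themselves accumulate; so the ``delicate technical core'' you defer is exactly where the whole difficulty lives, and your stated reason for why it closes is incorrect. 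The paper's appeal to Purisch's $\le 2$-to-one closed preimage theorem sidesteps all of this.
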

\begin{proof}
By \cite[Theorem]{purisch-compactifications}, $X$ is in fact a LOTS (possibly, with a different order relation) and by \cite[Theorem 3]{purisch-orderability_closed_images}, there is an ordinal $\lambda$, $Y\subset\lambda$  and a function $f\colon Y\to X$ that is closed, continuous, surjective and has fibers of cardinality at most $2$.

We claim that $Y$ has countable extent. Indeed, let $D\subset Y$ be closed and discrete. Let $E=f[D]$. Since $f$ is closed, any subset of $E$ is closed in $X$. Then $E$ is closed discrete. Since $Y$ has countable extent, $E$ is countable. Since the fibers of $f$ are finite, then $D$ is also countable.

By Theorem \ref{thm:subspaces-ordinals}, $Y$ is exponentially separable. As noted in \cite[Proposition 3.2 (c)]{tka-nice_subclass}, exponentially separability is preserved under continuous images. Thus, $X$ is exponentially separable.
\end{proof}

In \cite{tka_wil-fc_GO} another subclass of GO spaces shown to be functionally countable is that of locally countable GO spaces of countable extent; we can extend that result as follows.

\begin{thm}
Let $X$ be a GO space which is locally countable. Then $X$ is exponentially separable if and only if $X$ has countable extent.
\end{thm}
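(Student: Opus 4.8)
The forward implication is immediate from a general fact: exponentially separable spaces have countable extent. Indeed, if $D\subseteq X$ is closed and discrete, then every subset of $D$ is closed in $X$; choosing $D'\subseteq D$ with $|D'|=\aleph_1$ together with pairwise $\subseteq$-incomparable infinite sets $\{S_d:d\in D'\}\subseteq[\omega]^\omega$ and putting $F_n=\{d\in D':n\in S_d\}$ for $n<\omega$, each $F_n$ is closed in $X$ and $\bigcap_{n\in S_d}F_n=\{d\}$, so any set strongly dense in the countable family $\{F_n:n<\omega\}$ must contain every $d\in D'$; hence an uncountable closed discrete subspace prevents $X$ from being exponentially separable.

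For the converse, let $X$ be a locally countable GO space of countable extent, with underlying linear order $\le$, and fix for each $x$ a countable convex open neighbourhood $I_x\ni x$. The crucial local fact, forced by local countability, is that $\{y:y<x\}$ has countable cofinality and $\{y:y>x\}$ has countable coinitiality for every $x\in X$; hence all ``large'' behaviour of $X$ is confined to the two ends of $X$ and to Dedekind gaps of uncountable one-sided cofinality, and in particular a strictly increasing chain of convex subsets whose length has uncountable cofinality can only be bounded above by a gap, never by a point of $X$ (dually below).

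The plan is to imitate the proof of Theorem \ref{thm:subspaces-ordinals}. First I would prove, in the Dedekind completion $\overline X$, the analogues of Lemmas \ref{lemma:char-ctble-ext-ordinals} and \ref{lemma:ctle-ext-ordinals-unctable-cof}: countable extent of $X$ forces that, at each gap $g$ of $\overline X$ and at each end of $X$ of uncountable cofinality, the trace of $X$ is stationary, i.e.\ it meets every subset of $\overline X$ that is closed in $\overline X$ and cofinal up to $g$; and, the cofinality being uncountable, a countable intersection of such ``clubs'' is again one, so countably many such closed cofinal sets have common elements arbitrarily close to $g$. The proof is the construction of Lemma \ref{lemma:char-ctble-ext-ordinals}: a ``club'' witnessing failure lies inside $\overline X\setminus X$, and picking points of $X$ in the successive complementary intervals yields an uncountable closed discrete subspace. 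Then I would build a continuous increasing chain $\langle Z_\xi:\xi<\theta\rangle$ of convex subsets exhausting $X$, with $Z_{\xi+1}$ obtained from $Z_\xi$ by adjoining a countable convex set enlarging it at both ends (so each $Z_\xi$, a countable union of sets $I_x$, is countable), and prove by induction on $\xi$ that $Z_\xi$ is exponentially separable. Successor stages and limit stages of countable cofinality follow from finite and countable unions of exponentially separable subspaces (\cite[Proposition 3.2]{tka-nice_subclass}). At a limit stage $\xi$ of uncountable cofinality $Z_\xi$ is bounded above and below in $X$ only by gaps or ends of $X$; given closed sets $\{F_n:n<\omega\}$ of $Z_\xi$, partition $\omega$ according to whether $F_n$ is cofinal in $Z_\xi$, coinitial in $Z_\xi$, or neither, and use the stationarity lemma at the top gap and its mirror at the bottom gap to find $p^+,p^-\in Z_\xi$ lying respectively in all the cofinal $F_n$'s and in all the coinitial ones, chosen extreme enough that every $F_n$ of the third kind, and every intersection $\bigcap_{n\in I}F_n$ that is bounded in $X$, lies in a single earlier $Z_\eta$ which also contains $p^+$ and $p^-$; a countable $D\subseteq Z_\eta$ strongly dense in $\{F_n\cap Z_\eta:n<\omega\}$ (available by the inductive hypothesis) is then, exactly as in cases (i)–(ii) of Theorem \ref{thm:subspaces-ordinals}, strongly dense in $\{F_n:n<\omega\}$, since any nonempty $\bigcap_{n\in I}F_n$ is either bounded — hence trapped in $Z_\eta$ — or cofinal in $Z_\xi$ and contains $p^+\in Z_\eta$, or coinitial and contains $p^-\in Z_\eta$.

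The main obstacle is the stationarity lemma in the completion (the GO analogue of Lemmas \ref{lemma:char-ctble-ext-ordinals} and \ref{lemma:ctle-ext-ordinals-unctable-cof}) together with the bookkeeping at limit stages: one must make sense of ``club'' in $\overline X$, treat gaps of every uncountable cofinality uniformly, check that the chain $\langle Z_\xi\rangle$ can be taken with countable successive increments, and verify that $p^+$ and $p^-$ genuinely collapse the strong-density problem to one earlier stage. Once this is arranged, the argument is a faithful transcription of the proof of Theorem \ref{thm:subspaces-ordinals}.
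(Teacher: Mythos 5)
Your forward implication is correct and is actually a different (and more self-contained) argument than the paper's: the paper extends a real-valued function from the uncountable closed discrete set by Tietze--Urysohn and concludes that $X$ is not functionally countable, whereas you encode an uncountable subset of $D$ by a countable family of closed sets using $\subseteq$-incomparable subsets of $\omega$; your version needs no normality and shows directly that exponential separability implies countable extent in any space.

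The converse, however, is only a plan, and the plan has genuine gaps beyond the one you flag. First, your ``crucial local fact'' is false for GO spaces: local countability gives each point a countable \emph{convex} neighbourhood, but that neighbourhood may be one-sided (e.g.\ $\omega_1+1$ with the top point isolated is a locally countable GO space of countable extent in which $\{y:y<x\}$ has cofinality $\omega_1$ for $x=\omega_1$), so uncountable cofinality is not confined to gaps and ends, and your limit-stage analysis must also handle points that topologically detach from one side. Second, a single increasing chain of \emph{convex} sets with countable successive increments cannot exhaust $X$ in general: $X$ may contain several maximal convex pieces on which every interval $[x,y]$ is countable, separated by uncountable intervals, and any convex set meeting two such pieces must swallow everything in between, so the increments cannot stay countable. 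Third, your stationarity argument produces a point of $X$ in $\bigcap_n \cl[\overline X]{F_n}$, but the GO topology is finer than the order topology inherited from the Dedekind completion, so $\cl[\overline X]{F_n}\cap X$ may properly contain $F_n$ and the point you find need not lie in $\bigcap_n F_n$; this issue does not arise in Theorem \ref{thm:subspaces-ordinals} because there $X$ carries the subspace topology of an ordinal. The paper avoids all of this by a different route: it defines $x\sim y$ iff $[\min\{x,y\},\max\{x,y\}]$ is countable, observes that the classes are clopen convex sets of cofinality and coinitiality at most $\omega_1$ (hence of one of four explicit order types), writes each class as a countable union of \emph{closed scattered} subspaces, and then applies the already-proved theorem that scattered GO spaces of countable extent are exponentially separable together with preservation under countable unions. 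Your sketch could probably be repaired along these lines, but as written it does not constitute a proof.
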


\begin{proof}
First, assume that $X$ is a GO space and $D$ is an uncountable, closed and discrete subset of $X$. Then there is a function $f\colon D\to\R$ that is injective on some uncountable subset of $D$. Since $X$ is normal, $f$ can be extended to $X$ by the Tietze-Urysohn theorem. Thus, $X$ is not functionally countable. Therefore, the direct implication follows.

To prove the reverse direction, let $X$ be a GO space of countable extent.

Let $\sim$ be the following equivalence relation on $X$: $x \sim y$ if and only if $[\min{ \{x,y\}}, \max{\{x,y\}}]$ is countable. Equivalence classes are clearly convex, and by local countability, they are clopen. The cofinality and the coinitiality of every equivalence class is $\leq \omega_1$. So if $C$ is an equivalence class then $C$ is order-isomorphic to one of the following:

\begin{enumerate}
\item A subset of $\mathbb{Q}$ (if its cofinality and coinitiality are $\leq \omega$).
\item A subset of the lexicographically ordered  $\omega_1 \times\mathbb{Q}$ (if its cofinality is $\omega_1$ and its coinitiality is $\leq \omega$).
\item A subset of the lexicographically ordered $\omega_1^* \times\mathbb{Q}$ (if its cofinality is $\leq \omega$ and its coinitiality is $\omega_1$).
\item A set of the form $B\cup A$ where $A$ is as in case (2), $B$ is as in case (3) and $b<a$ for all $a\in A$ and $b\in B$ (if its cofinality and coinitiality are both $\omega_1$).
\end{enumerate}

We claim that each equivalence class is a countable union of closed scattered subspaces. Let $C$ be an equivalence class; we will describe closed scattered sets $\{C_n\colon n\in\omega\}$ whose union is $C$. We will only do this for case (2) described above and leave the rest to the reader. Since the cofinality of $C$ is $\omega_1$, there is $S=\{s_\alpha\colon \alpha<\omega_1\}\subset C$  such that $s_\alpha<s_\beta$ if $\alpha<\beta<\omega_1$ and $S$ is cofinal in $C$. For each $\alpha<\omega_1$ let $J_\alpha=\{x\in C\colon \forall \beta<\alpha, s_\beta<x\leq s_\alpha\}$. By choosing a subsequence of $S$, we may assume that $J_\alpha$ is countable infinite for each $\alpha\in\omega_1$, so we may choose an enumeration $J_{\alpha}=\{x(\alpha,n)\colon n\in\omega\}$ for each $\alpha\in\omega_1$. Also, let $L$ be the set of limit points of $S$; that is, $x\in L$ if and only if there is a limit $\beta\in\omega_1$ such that $x=\sup\{s_\alpha\colon\alpha<\beta\}$. Then let $C_n=\{x(\alpha,n)\colon \alpha\in\omega_1\}\cup L$ for each $n\in\omega$.

Since every equivalence class is clopen, it follows that $X$ itself is a countable union of closed scattered subspaces. Now, since every scattered GO space of countable extent is exponentially separable then $X$ is also exponentially separable.
\end{proof}

We summarize our results so far as follows; notice that this extends \cite[Corollary 3.6]{tka_wil-fc_GO} and \cite[Corollary 3.8]{tka_wil-fc_GO}.

\begin{coro}\label{coro:GO_exp_sep}
Let $X$ be a GO space that is either scattered or locally countable. Then the following are equivalent.
\begin{enumerate}[label=(\roman*)]
    \item $X$ is functionally countable,
    \item $X$ is exponentially separable, and
    \item $X$ has countable extent.
\end{enumerate}
\end{coro}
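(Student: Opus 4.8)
The plan is to assemble Corollary \ref{coro:GO_exp_sep} from the three theorems already proved in this section, so the "proof" is really a bookkeeping exercise showing that the implications chain together. The implication (ii) $\Rightarrow$ (i) is immediate since exponential separability implies functional countability (noted in the introduction, following Tkachuk's characterization). The implication (i) $\Rightarrow$ (iii) is the only place where the class of spaces matters: I would argue that a GO space which is not of countable extent carries an uncountable closed discrete subset $D$, hence—exactly as in the first paragraph of the proof of Theorem on locally countable GO spaces—one extracts an uncountable $D'\subset D$ and an injective $f\colon D'\to\R$, extends it over $X$ by Tietze--Urysohn (GO spaces are normal), and concludes $X$ is not functionally countable; this argument uses nothing about scattered or locally countable, so it applies uniformly to both cases.

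For (iii) $\Rightarrow$ (ii) I would simply split into the two cases named in the hypothesis: if $X$ is scattered and has countable extent, it is exponentially separable by the Theorem on scattered GO spaces; if $X$ is locally countable and has countable extent, it is exponentially separable by the Theorem on locally countable GO spaces. Combining, under either hypothesis on $X$ we get (i) $\Rightarrow$ (iii) $\Rightarrow$ (ii) $\Rightarrow$ (i), so all three are equivalent.

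I do not anticipate any real obstacle: every nontrivial step is quoted from an earlier result in the section, and the only mild care needed is to phrase the argument for $\neg$(iii) $\Rightarrow$ $\neg$(i) once, in a way that is visibly independent of which of the two structural hypotheses $X$ satisfies (so that we are not secretly assuming local countability when we invoke it in the scattered case). One could alternatively point out that (i) $\Rightarrow$ (iii) for \emph{arbitrary} GO spaces—the Tietze extension argument never used scatteredness or local countability—and only the converse direction (iii) $\Rightarrow$ (ii) requires restricting the class; I would include that remark since it slightly sharpens the statement and clarifies where each hypothesis is spent.

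\begin{proof}
The implication (ii) $\Rightarrow$ (i) holds for every space, since exponentially separable spaces are functionally countable (as recalled in the introduction, via \cite[Theorem 3.8]{tka-nice_subclass}).

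For (i) $\Rightarrow$ (iii), we show the contrapositive, and we note that this argument works for an arbitrary GO space, using neither scatteredness nor local countability. Suppose $X$ is a GO space with uncountable extent, and let $D\subset X$ be uncountable, closed and discrete. As in the proof of the previous theorem, there is an uncountable $D'\subset D$ and an injective function $f\colon D'\to\R$; since $D$ is closed and discrete, $f$ extends to a continuous function $g\colon D\to\R$, and since GO spaces are normal, the Tietze--Urysohn theorem extends $g$ to a continuous $h\colon X\to\R$. Then $h[X]\supset f[D']$ is uncountable, so $X$ is not functionally countable.

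Finally, for (iii) $\Rightarrow$ (ii) we use the hypothesis that $X$ is scattered or locally countable. If $X$ is a scattered GO space of countable extent, then $X$ is exponentially separable by the theorem on scattered GO spaces above. If $X$ is a locally countable GO space of countable extent, then $X$ is exponentially separable by the theorem on locally countable GO spaces above. In either case (ii) holds, and the three statements are equivalent.
\end{proof}
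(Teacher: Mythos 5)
Your proposal is correct and assembles the corollary exactly as the paper intends: the paper gives no explicit proof, presenting the corollary as a summary of the two preceding theorems together with the Tietze--Urysohn argument for $\neg(iii)\Rightarrow\neg(i)$ already written out in the first paragraph of the proof of the locally countable case. Your observation that the implication (i) $\Rightarrow$ (iii) holds for arbitrary GO spaces is a accurate and worthwhile clarification, but it does not change the substance of the argument.
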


Recall that a Souslin line is a linearly ordered set that (with the order topology) is ccc but not separable. In \cite{GD-HG} it was shown that if Souslin lines exist, then there are some of them that are functionally countable and some that are not. Here we show that those that are functionally countable are in fact exponentially separable.

\begin{propo}\label{propo:souslin}
If $X$ is a Suslin line, then $X$ is functionally countable if and only if $X$ is exponentially separable.
\end{propo}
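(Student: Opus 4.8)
The plan is to prove something slightly stronger than the displayed statement, namely that \emph{every} Souslin line is perfectly normal; granting this, the Proposition follows immediately from Tkachuk's result that functional countability and exponential separability coincide on perfectly normal spaces (\cite[Corollary 3.9]{tka-nice_subclass}). In particular the hypothesis that $X$ be functionally countable plays no real role — the content is entirely in deriving perfect normality from the fact that a Souslin line is a ccc LOTS.

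To show that a Souslin line $X$ is perfectly normal it suffices to show that every open subset of $X$ is an $F_\sigma$. The first and only substantial step is the observation that a ccc LOTS admits no strictly increasing $\omega_1$-sequence: if $\{d_\alpha:\alpha<\omega_1\}$ were strictly increasing, then for each limit ordinal $\lambda<\omega_1$ the open interval $(d_{\lambda+1},d_{\lambda+3})$ is nonempty (it contains $d_{\lambda+2}$), and these intervals are pairwise disjoint, since $\lambda<\mu$ forces $d_{\lambda+3}<d_{\mu+1}$; as there are uncountably many limit ordinals below $\omega_1$, this contradicts the countable chain condition. Applying the same argument to the reverse order, $X$ has no strictly decreasing $\omega_1$-sequence either. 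Hence every convex subset $C\subseteq X$ has countable cofinality and countable coinitiality, since an uncountable cofinality (resp.\ coinitiality) would let one recursively construct a strictly increasing (resp.\ decreasing) $\omega_1$-sequence inside $C$.

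Next I would deduce that every convex open set is an $F_\sigma$. Given a nonempty convex open $U\subseteq X$, use countable cofinality to choose $b_0<b_1<\cdots$ in $U$ cofinal in $U$ (or all equal to $\max U$, when that exists), and use countable coinitiality to choose $a_0>a_1>\cdots$ in $U$ coinitial in $U$ (or all equal to $\min U$). Convexity gives $[a_n,b_n]\subseteq U$ for each $n$, each $[a_n,b_n]$ is closed in $X$, and $U=\bigcup_{n<\omega}[a_n,b_n]$, so $U$ is an $F_\sigma$. Finally, an arbitrary open $V\subseteq X$ is the union of its convex components; these are open (as always in an ordered space) and pairwise disjoint, so by the countable chain condition there are only countably many of them, whence $V$ is a countable union of convex open sets and therefore an $F_\sigma$. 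This proves that $X$ is perfectly normal, and \cite[Corollary 3.9]{tka-nice_subclass} finishes the proof.

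I do not expect any genuine obstacle here: the single idea is that the countable chain condition precludes long monotone sequences in an ordered space, and everything else is routine bookkeeping — checking that the convex components of an open set are themselves open, and that the chosen cofinal and coinitial sequences correctly reassemble each convex open piece. It is worth remarking that the same reasoning shows that every Souslin line, and more generally every ccc LOTS, is first countable, which is presumably why this route is available.
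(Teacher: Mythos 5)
Your proof is correct, and it follows the same overall reduction as the paper --- both arguments show that every Souslin line is perfectly normal and then invoke \cite[Corollary 3.9]{tka-nice_subclass} --- but your route to perfect normality is genuinely different. The paper's proof is a two-line appeal to the classical fact that a Souslin line, being a ccc LOTS, is hereditarily Lindel\"of: in a regular hereditarily Lindel\"of space every open set is a countable union of open sets whose closures it contains, hence an $F_\sigma$, and normality comes from orderability. You avoid hereditary Lindel\"ofness entirely and argue straight from the countable chain condition: ccc forbids strictly monotone $\omega_1$-sequences, so every convex open set has countable cofinality and coinitiality and is a countable union of closed intervals, and ccc bounds the number of convex components of an arbitrary open set. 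Your version is longer but self-contained at an elementary level, whereas the paper outsources the work to a standard theorem about ccc ordered spaces. Two small points to tighten: reducing perfect normality to ``every open set is an $F_\sigma$'' tacitly uses that every LOTS is normal (the paper states this step explicitly, and you should too); and when assembling $U=\bigcup_{n<\omega}[a_n,b_n]$ you should anchor both sequences at a common point of $U$ so that $a_n\leq b_n$ for every $n$ --- harmless bookkeeping, but worth a sentence.
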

\begin{proof}
By \cite[Corollary 3.9]{tka-nice_subclass}, we only need to prove that $X$ is perfectly normal. Since $X$ is regular and hereditarily Lindelöf, every open set $U$ of $X$ is the union of a countable family of open sets whose closure is contained in $U$. Thus, every open subset of $X$ is an $F_\sigma$ and consequently, every closed subset of $X$ is a $G_\delta$. Since $X$ is orderable, it is normal. Thus, $X$ is perfectly normal.
\end{proof}

According to \cite[Lemma 20]{GD-HG}, if a Souslin line is densely ordered and is left-separated in type $\omega_1$, it is functionally countable. From the previous result we obtain the following.

\begin{coro}
Every densely ordered and left-separated Suslin line in type $\omega_1$ is exponentially separable.
\end{coro}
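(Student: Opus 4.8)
The plan is to simply chain together the two ingredients that have already been assembled. By \cite[Lemma 20]{GD-HG}, any Souslin line that is densely ordered and left-separated in type $\omega_1$ is functionally countable; this is quoted verbatim in the sentence preceding the corollary. Then Proposition \ref{propo:souslin} asserts that for \emph{any} Souslin line the properties ``functionally countable'' and ``exponentially separable'' coincide. Composing these two facts immediately yields the conclusion.

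Concretely, I would first let $X$ be a densely ordered Souslin line that is left-separated in type $\omega_1$, invoke \cite[Lemma 20]{GD-HG} to conclude $X$ is functionally countable, and then invoke the ``only if'' direction of Proposition \ref{propo:souslin} (which applies since $X$ is a Souslin line) to conclude $X$ is exponentially separable. No further hypotheses are needed and no case analysis is required.

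There is essentially no obstacle here: the corollary is a one-line deduction from results already in hand, and its only purpose is to record the strengthening of \cite[Lemma 20]{GD-HG} obtained by replacing functional countability with exponential separability via Proposition \ref{propo:souslin}. The proof can therefore be written in a single sentence.

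\begin{proof}
By \cite[Lemma 20]{GD-HG} such a Souslin line is functionally countable, hence exponentially separable by Proposition \ref{propo:souslin}.
\end{proof}
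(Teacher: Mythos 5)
Your proof is correct and is exactly the argument the paper intends: the corollary is presented as an immediate consequence of \cite[Lemma 20]{GD-HG} together with Proposition \ref{propo:souslin}, with no further work needed. Nothing to add.
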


During the 2023 BIRS-CMO Set-Theoretic Topology workshop, Alan Dow conjectured that Souslin lines obtained by lexicographically ordering a Souslin tree are always functionally countable. We were able to prove this conjecture by using that a normal Souslin tree with the tree topology is functionally countable, a fact previously     proved by Steprāns in \cite{steprans-trees}. 

For terminology on trees, the reader may consult \cite[Section III.5]{kunen-set_theory_2011} or \cite{todorcevic-handbook}. For the reader's convenience, we recall some facts and fix notation. 

Let $\langle T,\sqsubseteq\rangle$ be a tree. For an ordinal $\alpha$, the $\alpha$-th level of $T$ is the set $T_\alpha$ of all elements of $T$ with height $\alpha$ . The height of $T$ is the minimal $\beta$ such that $T_\beta=\emptyset$. An $\omega_1$-tree is a tree of height $\omega_1$ all levels of which are countable. A Souslin tree is an $\omega_1$-tree without countable chains or antichains.

Given $x\in T$, let $x{\downarrow}=\{y\in T\colon y\sqsubset x\}$ and $x{\uparrow}=\{y\in T\colon x\sqsubset y\}$. A tree $T$ of height $\kappa$ is called \emph{normal} if
\begin{enumerate}[label=(\roman*)]
\item $T$ is \emph{rooted}: $\lvert T_0\rvert=1$ ,
\item $T$ is \emph{splitting}: for all $\alpha+1<\kappa$ and $x\in T_\alpha$ there are $y_0,y_1\in T_{\alpha+1}$ with $x\sqsubset y_0$, $x\sqsubset y_1$ and $y_0\neq y_1$ ,
\item $T$ is \emph{pruned}: if $\alpha<\beta<\kappa$ and $x\in T_\alpha$, there exists $y\in T_\beta$ such that $x\sqsubset y$, and
\item $T$ is \emph{Hausdorff}: if $\alpha<\kappa$ is a limit and $x,y\in T_\alpha$ are such that $x{\downarrow}=y{\downarrow}$, then $x=y$ .
\end{enumerate}

The \emph{tree topology} on $T$ is the topology generated by the sets of the form $x{\downarrow}$ and $x{\uparrow}$, for $x\in T$. Notice that the tree topology of a normal tree is Hausdorff. In \cite{steprans-trees} it is proved that a normal uncountable tree is functionally countable if and only if it is a Souslin tree (although the term ``functionally countable'' is not mentioned).

Given a normal tree $\langle T,\sqsubseteq\rangle$ and a total order $\preceq$ in $T$, we define a total order $\leq$ in $T$ as follows. Given $s,t\in T$, we write $s\perp t$ if $s\not\sqsubseteq t$ and $t\not\sqsubseteq s$. If $s,t\in T$ and $s\perp t$, the set $s{\downarrow}\cap t{\downarrow}$ has a maximal element $s\wedge t$ (because of normality of the tree) so there are $s_{s\wedge t},t_{s\wedge t}\in T$ that are immediate successors of $s\wedge t$, $s_{s\wedge t}\sqsubseteq s$ and $t_{s\wedge t}\sqsubseteq t$.  If $t,s\in T$ then define $s\leq t$ if
\begin{itemize}
\item either $s\sqsubseteq t$, or 
\item $s\perp t$ and $s_{s\wedge t}\prec t_{s\wedge t}$.
\end{itemize}
Then $\langle T,\leq\rangle$ is a linearly ordered set and we may consider it as a LOTS. It is well-known that if $\langle T,\sqsubseteq\rangle$ is a Souslin tree, then $\langle T,\leq\rangle$ is a Souslin line.

\begin{thm}\label{thm:souslin_line_fc}
A Souslin line defined from a normal Souslin tree is functionally countable and thus, exponentially separable.
\end{thm}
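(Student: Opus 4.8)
\textit{Proof sketch.} The plan is to exploit Steprāns' theorem from \cite{steprans-trees}, that a normal Souslin tree $\langle T,\sqsubseteq\rangle$ with the \emph{tree topology} $\tau_{\mathrm{t}}$ is functionally countable, by comparing $\tau_{\mathrm{t}}$ with the order topology $\tau_{\leq}$ of the associated LOTS $\langle T,\leq\rangle$. Concretely, I will show that $\tau_{\leq}\subseteq\tau_{\mathrm{t}}$, i.e. that the identity map $\mathrm{id}\colon(T,\tau_{\mathrm{t}})\to(T,\tau_{\leq})$ is continuous. Granting this, if $f\colon\langle T,\leq\rangle\to\R$ is continuous, then $f\circ\mathrm{id}\colon(T,\tau_{\mathrm{t}})\to\R$ is continuous and $f\circ\mathrm{id}[T]=f[T]$, so $f[T]$ is countable by Steprāns' theorem. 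Hence $\langle T,\leq\rangle$ is functionally countable, and since it is a Souslin line it is exponentially separable by Proposition \ref{propo:souslin}.

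The core of the argument is therefore the containment $\tau_{\leq}\subseteq\tau_{\mathrm{t}}$. Since $\tau_{\leq}$ is generated by the rays $(\leftarrow,b)$ and $(a,\to)$, and these two families are interchanged by replacing the auxiliary order $\preceq$ by its reverse, it suffices to prove that every ray $(a,\to)=\{x\in T\colon x>a\}$ is $\tau_{\mathrm{t}}$-open. Fix $x>a$ and distinguish cases according to the position of $a$ relative to $x$ in the tree. If $a\sqsubset x$, then $a{\uparrow}$ is a $\tau_{\mathrm{t}}$-open neighbourhood of $x$, and every $y\in a{\uparrow}$ satisfies $a\sqsubset y$, hence $a\leq y$ and $a\neq y$, so $y>a$. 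If $a\perp x$, put $c=a\wedge x$ and let $a_c=a_{a\wedge x}$ and $x_c=x_{a\wedge x}$ be the immediate successors of $c$ below $a$ and below $x$ respectively; from $x>a$ and $a\perp x$ we get $a_c\prec x_c$. If $x_c\sqsubsetneq x$, then $x_c{\uparrow}$ is a $\tau_{\mathrm{t}}$-open neighbourhood of $x$, and each $y\in x_c{\uparrow}$ satisfies $x_c\sqsubseteq y$; one checks that $y$ is incomparable with $a$, that $y\wedge a=c$, and that the immediate successor of $c$ below $y$ is $x_c\succ a_c$, so $y>a$. If instead $x_c=x$, pick any $z\sqsupset x$ (possible since $T$ is pruned of height $\omega_1$); then $c{\uparrow}\cap z{\downarrow}=\{y\colon c\sqsubset y\sqsubset z\}$ is a $\tau_{\mathrm{t}}$-open neighbourhood of $x$, every such $y$ has $x\sqsubseteq y$ because $x$ is the immediate successor of $c$, and as before $y$ is incomparable with $a$, $y\wedge a=c$, and $y>a$. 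In every case we have produced a $\tau_{\mathrm{t}}$-open neighbourhood of $x$ contained in $(a,\to)$, so $(a,\to)\in\tau_{\mathrm{t}}$.

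I expect the main obstacle to be exactly this case analysis for $a\perp x$: one must unwind the definition of the lexicographic order $\leq$ through the meet operation and immediate successors, carefully using the normality of $T$ (which is what guarantees that $a\wedge x$, $a_c$ and $x_c$ are well defined) in order to verify that the candidate tree-basic neighbourhoods genuinely avoid the complementary ray $(\leftarrow,a]$. Once $\tau_{\leq}\subseteq\tau_{\mathrm{t}}$ has been established, the transfer of functional countability from $\langle T,\sqsubseteq\rangle$ to $\langle T,\leq\rangle$ along the continuous bijection $\mathrm{id}$, and the final appeal to Proposition \ref{propo:souslin}, are immediate.
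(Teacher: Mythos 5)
Your proposal follows the same route as the paper: prove that the identity from the tree topology to the LOTS topology is continuous by showing the order rays are tree-open, then transfer Steprāns' functional countability along this continuous bijection, and conclude exponential separability via Proposition \ref{propo:souslin}. Your treatment of the upper rays $(a,\to)$, including the case analysis for $a\perp x$ and the use of prunedness to find $z\sqsupset x$, is correct and matches the paper's argument (the paper packages your two subcases into the single neighbourhood $u{\downarrow}\cap\left[(a\wedge x){\uparrow}\right]$ for $u\sqsupset x$). The one step that does not work as stated is the symmetry reduction: reversing the auxiliary order $\preceq$ does \emph{not} interchange the families of upper and lower rays, because the first clause of the definition of $\leq$ (namely $s\leq t$ whenever $s\sqsubseteq t$) is independent of $\preceq$. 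Concretely, if $\leq^{*}$ denotes the order built from the reversed $\preceq$, then $(b,\to)_{\leq^{*}}$ and $(\leftarrow,b)_{\leq}$ agree on elements incomparable with $b$ but differ on $b{\downarrow}\cup b{\uparrow}$, so $\leq^{*}$ is not the reverse of $\leq$ and the lower rays are not covered by your argument. This is easily repaired — the lower ray $(\leftarrow,b)$ is handled by a directly parallel argument in which the comparable case $x\sqsubset b$ uses the neighbourhood $b{\downarrow}$ instead of $b{\uparrow}$, exactly as the paper does — but as written the reduction is a gap you should close by running that second case explicitly rather than by appeal to symmetry.
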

\begin{proof}
Let $\langle T,\sqsubseteq\rangle$ be a normal Souslin tree and denote its tree topology by $\tau$. Let $\prec$ be any well-order of $T$ and let $\leq$ be the lexicographic order defined as described above. Let $\sigma$ be the LOTS topology of $\langle T,\leq\rangle$.

We will prove that the  identity $\mathsf{id}\colon\langle T,\tau\rangle\to\langle T,\sigma\rangle$ is continuous. In \cite{steprans-trees} it was proved that $\langle T,\tau\rangle$ is functionally countable. Since functional countability is clearly preserved by continuous images, it follows that $\langle T,\sigma\rangle$ is also functionally countable. So it is enough to prove that if $t\in T$, then
$$
\begin{array}{lcl}
U & = & \{s\in T\colon t<s\}\textrm{ and}\\
V & = & \{s\in T\colon s<t\}
\end{array}
$$
are open in the tree topology.

First we prove that $U\in \tau$; let $s\in U$. If $t\sqsubset s$, then $t{\uparrow}$ is a $\tau$-open set such that $s\in t{\uparrow}\subset U$. Otherwise, $s\perp t$ and $t_{s\wedge t}\prec s_{s\wedge t}$. In this case, since $T$ is normal there exists $u\in T$ such that $s\sqsubset u$. Let $W=\left[u{\downarrow}\right]\cap\left[(s\wedge t){\uparrow}\right]$; notice that $W$ is a $\tau$-open set. Since $s\wedge t\sqsubset s_{s\wedge t}\sqsubseteq s$, it follows that $s\in W$. Now, given $w\in W$, we will show that $w\in U$. Recall that by the definition of a tree, $u{\downarrow}$ is well-ordered. In $u{\downarrow}$, the immediate succesor of $s\wedge t$ is $s_{s\wedge t}$. Therefore, $s_{s\wedge t}\sqsubseteq w$, which implies that $w\perp t$, $w\wedge t=s\wedge t$, $w_{w\wedge t}=s_{s\wedge t}$ and $t_{w\wedge t}=t_{s\wedge t}$. Thus, $t_{w\wedge t}\prec w_{w\wedge t}$, which implies that $t<w$, that is, $w\in U$. We conclude that $s\in W\subset U$ for every $s\in U$. This implies that $U\in\tau$.

Now, let us argue why $V\in\tau$; let $s\in V$. If $s\sqsubset t$, then $t{\downarrow}$ is a $\tau$-open set such that $s\in t{\downarrow}\subset V$. Otherwise, $s\perp t$ and $s_{s\wedge t}\prec t_{s\wedge t}$. Since $T$ is normal, again there is $u\in T$ such that $s\sqsubset u$. Then, by an argument similar to the one in the previous paragraph, $W=\left[u{\downarrow}\right]\cap\left[(s\wedge t){\uparrow}\right]$ is a $\tau$-open set with $s\in W\subset V$.
\end{proof}

\section{Further examples of functionally countable and exponentially separable spaces}

In this section we explore various ways of constructing functionally countable and exponentially separable spaces, by using certain subspaces of products and the $G_\delta$ topology.

If $\prod_{\alpha\in A}{X_\alpha}$ is a Cartesian product and $B\subset A$, the projection $\pi_B\colon\prod_{\alpha\in A}{X_\alpha}\to \prod_{\alpha\in B}{X_\alpha}$ is defined by $\pi_B(f)=f\restriction B$ for all $f\in\prod_{\alpha\in A}{X_\alpha}$. We will need the following classical factorization result.

\begin{thm}[Factorization Lemma] \cite[0.2.3]{arkh-top_funct_spaces}
 Let $X=\prod_{\alpha\in A}{X_\alpha}$ be a product of spaces, each with a countable base, let $Y$ be dense in $X$ and $f\colon Y\to\R$ continuous. There is a countable set $B\subset A$ and a continuous function $g\colon \pi_B[Y]\to\R$ such that $f=g\circ \pi_B$.
\end{thm}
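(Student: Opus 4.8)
The plan is to recover $f$ from countably many \emph{regular} open sets rather than from arbitrary open preimages, using that regular open subsets of a ccc product are supported on countably many coordinates. First, the setup: a product of separable spaces is ccc (by the $\Delta$-system lemma), so $X$ is ccc, its factors being second countable and hence separable; and for countable $B\subseteq A$ the projection $\pi_B$ is continuous, open and onto. The key lemma I would prove is: \emph{if $V\subseteq X$ is regular open, then $V=\pi_B^{-1}[\pi_B[V]]$ for some countable $B\subseteq A$ (call such a $V$ a $B$-cylinder).} Indeed, a maximal pairwise disjoint family $\mathcal W$ of basic open boxes contained in $V$ is countable by ccc, its union $W$ is dense in $V$, and, letting $B$ be the union of the finite supports of the members of $\mathcal W$, $W$ is a $B$-cylinder; hence $\overline W=\overline V$ is a $B$-cylinder, and so is $V=\operatorname{int}\overline V$ (interiors and closures of $B$-cylinders are $B$-cylinders, as $\pi_B$ is continuous, open and onto).

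Next I would feed $f$ in. For $q\in\mathbb Q$ let $V^-_q=\operatorname{int}_Y\operatorname{cl}_Y f^{-1}((-\infty,q))$ and $V^+_q=\operatorname{int}_Y\operatorname{cl}_Y f^{-1}((q,\infty))$, the regular open hulls in $Y$ of the ray-preimages. Since $Y$ is dense in $X$ one has $\operatorname{cl}_Y(Y\cap H)=Y\cap\operatorname{cl}_X H$ for $H$ open and $\operatorname{int}_Y(Y\cap C)=Y\cap\operatorname{int}_X C$ for $C$ closed, so $V^\pm_q=Y\cap\widehat V^\pm_q$ for suitable regular open sets $\widehat V^\pm_q$ of $X$; moreover these are monotone, $\widehat V^-_p\subseteq\widehat V^-_q$ and $\widehat V^+_q\subseteq\widehat V^+_p$ for $p<q$, by monotonicity of the ray-preimages. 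By the key lemma there is a single countable $B\subseteq A$ making \emph{every} $\widehat V^\pm_q$ ($q\in\mathbb Q$) a $B$-cylinder; write $\widehat V^\pm_q=\pi_B^{-1}[\widehat S^\pm_q]$, with $\widehat S^-_q$ increasing open and $\widehat S^+_q$ decreasing open in $\prod_{\alpha\in B}X_\alpha$.

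Finally, the reconstruction. Fix $y\in Y$. If $f(y)<q$ then $y\in f^{-1}((-\infty,q))\subseteq V^-_q$, so $\pi_B(y)\in\widehat S^-_q$; and if $f(y)>q$ then $y\notin V^-_q$, because $V^-_q\subseteq\operatorname{int}_Y f^{-1}((-\infty,q])\subseteq f^{-1}((-\infty,q])$ by continuity of $f$, so $\pi_B(y)\notin\widehat S^-_q$. Hence $f(y)=\inf\{q\in\mathbb Q:\pi_B(y)\in\widehat S^-_q\}$, and symmetrically $f(y)=\sup\{q\in\mathbb Q:\pi_B(y)\in\widehat S^+_q\}$. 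In particular $\pi_B(y)=\pi_B(y')$ forces $f(y)=f(y')$, so $g(\pi_B(y)):=f(y)$ well-defines a map $g\colon\pi_B[Y]\to\mathbb R$ with $f=g\circ\pi_B$; and $g$ is continuous, since $\{g<c\}=\bigcup_{q<c}(\widehat S^-_q\cap\pi_B[Y])$ is open (so $g$ is upper semicontinuous) while $\{g>c\}=\bigcup_{q>c}(\widehat S^+_q\cap\pi_B[Y])$ is open (so $g$ is lower semicontinuous).

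The main obstacle is the key lemma, together with the observation that one genuinely needs \emph{regular} open sets: an arbitrary open subset of a ccc product need not be supported on countably many coordinates, and a maximal disjoint family of boxes inside an open set captures only a dense part of it, so the naive ``take the supports of such a family'' argument fails. Passing to regular open hulls is precisely what repairs this, while recovering $f$ both as an infimum over the increasing family $\{\widehat S^-_q\}$ and as a supremum over the decreasing family $\{\widehat S^+_q\}$ is what yields the continuity of $g$ from the two one-sided semicontinuities.
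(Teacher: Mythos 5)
The paper gives no proof of this statement---it is quoted from Arkhangelskii's book as a classical fact---so there is nothing in the paper itself to compare your argument against. Your proof is correct and is essentially the standard one: the product is ccc because its factors are separable, every regular open subset of a ccc product is a cylinder over a countable set of coordinates (via a maximal cellular family of boxes), and $f$ is reconstructed from the regular open hulls of the rational ray-preimages, with the sandwich $f^{-1}\left((-\infty,q)\right)\subseteq V^-_q\subseteq f^{-1}\left((-\infty,q]\right)$ doing the work both for well-definedness of $g$ and for the two one-sided semicontinuities; the transfer of regular open sets between the dense subspace $Y$ and $X$ is also handled correctly. (Incidentally, your argument uses only separability of the factors, slightly weaker than the countable-base hypothesis in the statement.)
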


We start by showing that the members of a family of spaces with arbitrarily large extent constructed by Uspenski are functionally countable.

\begin{thm}[Uspenski's examples] \cite{uspenskij} \label{uspenskij-thm}
 Let $S$ be an infinite set. There exists a space $X_S$ with the following properties:
 \begin{enumerate}[label=(\roman*)]
  \item $X_S$ is a dense subset of $[0,1]^S$,
  \item $X_S$ is the union of a countable collection of closed discrete subspaces of size $\lvert S\rvert$,
  \item for all $f\in X_S$, $\{\alpha\in S\colon f(\alpha)\neq 0\}$ is finite, and
  \item there is a countable set $C\subset[0,1]$ such that for all $f\in X_S$, $f[S]\subset C$.
 \end{enumerate}
\end{thm}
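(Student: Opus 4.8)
The plan is to recall Uspenskij's construction and check the four listed properties in turn. First fix, for each $n\ge 1$, a nonempty finite set $R_n\subset(0,1)$ such that the sets $R_n$ are pairwise disjoint and, for every $\varepsilon>0$, the set $R_n$ meets every subinterval of $(0,1)$ of length $\ge\varepsilon$ as soon as $n$ is large enough; for instance one may take $R_n$ to be the $\tfrac1{n+1}$-net $\{\,k/(n+1):1\le k\le n\,\}$ of $[0,1]$, perturbed by a tiny positive real $\delta_n$ chosen so that the resulting sets are mutually disjoint. Set $C=\{0\}\cup\bigcup_{n\ge 1}R_n$, which is countable, and for $n\ge 1$ put
$$D_n=\Bigl\{x\in[0,1]^S : \bigl|\{\alpha\in S : x(\alpha)\neq 0\}\bigr|=n \ \text{and}\ x(\alpha)\in R_n \ \text{whenever}\ x(\alpha)\neq 0\Bigr\}.$$
Then $X_S:=\bigcup_{n\ge 1}D_n$ will be the required space. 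Properties (iii) and (iv) are immediate, and $|D_n|=|[S]^n|\cdot|R_n|^n=|S|$ for each $n\ge 1$ (as $S$ is infinite), so what remains is density (i) and the verification that each $D_n$ is closed and discrete in $X_S$ (property (ii)).

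For (i), let $B$ be a basic open box of $[0,1]^S$, determined by distinct coordinates $\alpha_1,\dots,\alpha_k$ and open intervals $I_1,\dots,I_k$, and put $F_0=\{\alpha_i : 0\notin I_i\}$. Choose $n\ge|F_0|$ large enough that $R_n$ meets $I_i$ for every $\alpha_i\in F_0$. Pick $E\subset S$ with $|E|=n$, $F_0\subset E$ and $E\cap(\{\alpha_1,\dots,\alpha_k\}\setminus F_0)=\emptyset$, and let $x$ be supported on $E$ with $x(\alpha_i)\in R_n\cap I_i$ for $\alpha_i\in F_0$ and $x(\alpha)\in R_n$ arbitrary for $\alpha\in E\setminus F_0$. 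Then $x\in D_n\subset X_S$ and $x\in B$, since $x(\alpha_i)\in I_i$ when $\alpha_i\in F_0$ and $x(\alpha_i)=0\in I_i$ otherwise.

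For (ii), fix $n$ and $x\in D_n$ with support $F$. Choose $\varepsilon>0$ smaller than the distance from $x(\alpha)$ to $(R_n\cup\{0\})\setminus\{x(\alpha)\}$ for every $\alpha\in F$ (a positive quantity, since $R_n\cup\{0\}$ is finite); then the basic neighbourhood of $x$ requiring each coordinate in $F$ to stay within $\varepsilon$ of $x$ contains no point of $D_n$ other than $x$, because any such $x'$ must agree with $x$ on $F$, forcing $\mathrm{supp}(x')\supseteq F$ and hence, both supports having size $n$, $x'=x$. Thus $D_n$ is discrete. For closedness, note $D_n\subset(\{0\}\cup R_n)^S$, a closed subset of $[0,1]^S$; since $\{0\}\cup R_n$ is finite, along any net from $D_n$ converging to some $p$ each coordinate is eventually constant, so $p$ has values in $\{0\}\cup R_n$ and support of size at most $n$. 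If $\mathrm{supp}(p)$ has size exactly $n$ then $p\in D_n$; if it has size $m$ with $1\le m<n$ then the nonzero values of $p$ lie in $R_n$, which is disjoint from $R_m$, so $p\notin D_m$, while $p\notin D_{m'}$ for $m'\neq m$ because then $p$ has the wrong number of nonzero coordinates, whence $p\notin X_S$; and if $\mathrm{supp}(p)=\emptyset$ then $p$ is the zero function, which also lies outside $X_S$. Therefore $\overline{D_n}\cap X_S=D_n$, which is exactly property (ii).

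I expect the only genuinely delicate point to be the choice of the family $\{R_n\}_{n\ge 1}$: the disjointness of the $R_n$ is precisely what makes the ``accumulation points of $D_n$ with a smaller support fall outside $X_S$'' argument work, while the requirement that for any finite list of intervals a single $R_n$ (with $n$ as large as we like) already meet all of them is precisely what forces $X_S$ to be dense in $[0,1]^S$; one must check these can be met simultaneously, which the ``fine net plus tiny perturbation'' recipe does. Once this is set up the remaining verifications are routine, and one can then combine (iii), (iv) and the Factorization Lemma, as in the discussion preceding the theorem, to conclude that $X_S$ is functionally countable even though its extent is $|S|$.
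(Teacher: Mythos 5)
The paper does not prove this theorem at all: it is stated as a black box with a citation to Uspenskij's original article, so there is no internal proof to compare against. Your reconstruction is correct and is essentially Uspenskij's own construction (finitely supported points of $[0,1]^S$ whose nonzero values on the level-$n$ piece lie in a finite set $R_n$, with the $R_n$ pairwise disjoint and asymptotically dense in $(0,1)$). All four properties check out: the disjointness of the $R_n$ is exactly what keeps limit points of $D_n$ with smaller support out of $X_S$, and the ``net'' property of $R_n$ for large $n$ gives density. The one step you leave slightly compressed is that a cluster point $p$ of $D_n$ has at most $n$ nonzero coordinates; this is immediate (a point with $n+1$ coordinates bounded away from $0$ has a basic neighbourhood missing $D_n$) and does not really need the eventually-constant-net language. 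Nothing is missing.
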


\begin{thm}
For each uncountable cardinal $\kappa$, Uspenskii's example of cardinality $\kappa$ is a functionally countable space with a $G_\delta$ diagonal and extent $\kappa$, thus it is not exponentially separable.
\end{thm}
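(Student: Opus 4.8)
The plan is to check, one at a time, the four assertions about the space $X_S$ (where $|S|=\kappa$): functional countability, the existence of a $G_\delta$ diagonal, that the extent equals $\kappa$, and hence that exponential separability fails. Each uses only the structural facts (i)--(iv) of Theorem~\ref{uspenskij-thm} together with the Factorization Lemma.

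For functional countability, let $f\colon X_S\to\R$ be continuous. Since $X_S$ is dense in $[0,1]^S$ by (i) and $[0,1]$ has a countable base, the Factorization Lemma produces a countable set $B\subset S$ and a continuous $g\colon\pi_B[X_S]\to\R$ with $f=g\circ\pi_B$; thus $|f[X_S]|\le|\pi_B[X_S]|$ and it suffices to prove $\pi_B[X_S]$ is countable. Here is where (iii) and (iv) are used: every $h\in X_S$ satisfies $h[S]\subseteq C$ for the fixed countable $C$ (and, since supports are finite and $S$ is infinite, $0\in C$), and $\{\alpha\colon h(\alpha)\ne 0\}$ is finite. Hence $h\restriction B$ takes values in $C$ and is supported on a finite subset of $B$, so $\pi_B[X_S]$ injects into $\bigcup\{C^F\colon F\in[B]^{<\omega}\}$, a countable union of countable sets. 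Therefore $f[X_S]$ is countable. I expect this bookkeeping to be the delicate point: without the finite-support clause one would only get $\pi_B[X_S]\subseteq C^B$, a set of size continuum, so the argument would break.

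For the $G_\delta$ diagonal I would use (ii) alone. Write $X_S=\bigcup_{n\in\omega}D_n$ with each $D_n$ closed and discrete in $X_S$. For all $m,n$ the set $D_m\times D_n$ is closed in $X_S\times X_S$, and it is discrete as a subspace because $D_m$ and $D_n$ are; consequently every subset of $D_m\times D_n$ is closed in $X_S\times X_S$, in particular $(D_m\times D_n)\setminus\Delta$. Since $(X_S\times X_S)\setminus\Delta=\bigcup_{m,n\in\omega}\bigl((D_m\times D_n)\setminus\Delta\bigr)$, the complement of the diagonal is $F_\sigma$, i.e. $\Delta$ is $G_\delta$. (In other words, every $\sigma$-closed-discrete space has a $G_\delta$ diagonal.)

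Finally, by (ii) the space $X_S$ contains a closed discrete subspace of size $|S|=\kappa$, so $e(X_S)\ge\kappa$; and since $X_S$ is a countable union of sets of size $\kappa$, $|X_S|=\kappa$ and hence $e(X_S)=\kappa$. It remains to invoke the fact that exponentially separable spaces have countable extent: if $D$ were an uncountable closed discrete subspace of a space $X$, pick $\{d_\xi\colon\xi<\omega_1\}\subseteq D$ and an almost disjoint family $\{A_\xi\colon\xi<\omega_1\}$ of infinite subsets of $\omega$, and set $F_n=\{d_\xi\colon n\in A_\xi\}$, a closed subset of $X$; then $\bigcap\{F_n\colon n\in A_\xi\}=\{d_\xi\}$ by almost disjointness, so any subset of $X$ strongly dense in $\{F_n\colon n\in\omega\}$ contains every $d_\xi$ and is uncountable. (This implication can instead be quoted from \cite{tka-nice_subclass}.) Since $e(X_S)=\kappa\ge\aleph_1$, we conclude that $X_S$ is not exponentially separable. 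This last implication is the only ingredient coming from outside the list (i)--(iv) and the Factorization Lemma; the rest is routine verification.
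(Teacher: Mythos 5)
Your proof is correct and follows essentially the same route as the paper: functional countability via the Factorization Lemma plus the finite-support and countable-range properties, and the $G_\delta$ diagonal and extent claims read off from the $\sigma$-closed-discrete decomposition. The only difference is that you supply self-contained arguments for two facts the paper merely asserts or cites (that $\sigma$-closed-discrete spaces have a $G_\delta$ diagonal, and that exponentially separable spaces have countable extent), and both of those arguments are sound.
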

\begin{proof}
Let $X=X_{\kappa}$ be the space by Uspenskii from Theorem \ref{uspenskij-thm}. Clearly, points of $X$ are $G_\delta$ by property (ii). Also, $\sigma$-closed discrete spaces have a $G_\delta$ diagonal. Let $f\colon X\to\R$ be a continuous function. By the Factorization Lemma, there is a countable set $A\subset\kappa$ and a continuous function $g\colon \pi_A[X]\to\R$ such that $f=g\circ\pi_A$. Without loss of generality, assume that $A=\omega$. Let $Y=\pi_\omega[X]$; by properties (iii) and (iv) of $X$ we know that there exists a countable set $C\subset[0,1]$ and such that
$$
Y\subset\bigcup_{n\in\omega}{\left(\prod_{m<n}{C}\times\prod_{m\geq n}\{0\}\right)}
$$
Thus, $Y$ is countable. Then $g[Y]=f[X]$ is countable.
\end{proof}

The following theorem gives some insight on why certain classes of spaces are exponentially separable. Its proof is a variation of an argument due to Tkachuk.

Recall that given a space $X$ we denote by $X_\delta$ the $G_\delta$ topology on $X$, that is, the topology on $X$ generated by the $G_\delta$ subsets of $X$.

\begin{thm} \label{gdeltathm}
Let $X$ be a Lindel\"of space. If $X_\delta$ is also Lindel\"of then $X$ is exponentially separable.
\end{thm}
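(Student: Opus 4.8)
The plan is to reduce the exponential separability of $X$ to a covering argument in the $G_\delta$ topology. Let $\mathcal{F}=\{F_n\colon n<\omega\}$ be a countable family of closed subsets of $X$, and let $\mathcal{N}$ denote the collection of all nonempty finite intersections $F_{n_0}\cap\cdots\cap F_{n_k}$ that are nonempty. I want to produce a countable set $A$ hitting every member of $\mathcal{N}$; such an $A$ is then strongly dense in $\mathcal{F}$, since any subfamily of $\mathcal{F}$ with nonempty intersection has that intersection containing some member of $\mathcal{N}$ (in fact equal to an intersection from $\mathcal{N}$ when the subfamily is finite, and one reduces the infinite case to finite subfamilies by noting that $A$ meeting every finite subintersection suffices only if the infinite intersection is also hit — so more care is needed here, see below).

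The key idea, following Tkachuk, is this: for each point $x\in X$ and each $n$ with $x\notin F_n$, the set $X\setminus F_n$ is open, hence we can pick a co-zero (or at least $G_\delta$-closed) neighborhood; more usefully, consider for each $x\in X$ the set $\bigcap\{F_n\colon x\in F_n\}$ intersected with $\bigcap\{X\setminus F_n\colon x\notin F_n\}$. Call this $G_x$. Each $G_x$ is a $G_\delta$ set (countable intersection of the $F_n$'s, which are closed hence $G_\delta$ in a Lindelöf space? — no, closed sets in Lindelöf spaces need not be $G_\delta$; instead one works directly: $F_n$ is closed, and $X\setminus F_n$ is open, and the relevant sets are $G_\delta$ in $X_\delta$ trivially since open sets are $G_\delta$, while $F_n$ itself is closed in $X$ hence closed in $X_\delta$). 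So $\{G_x\colon x\in X\}$ is a partition of $X$ into sets each of which is the intersection of countably many $X_\delta$-closed and $X_\delta$-open sets, in particular each $G_x$ is closed in $X_\delta$. Moreover the complement $X\setminus G_x$ is a union of finitely... no, of countably many $X$-open or $X$-closed pieces, hence is $X_\delta$-open together with one $X_\delta$-closed piece; the upshot is that $\{G_x\colon x\in X\}$, while not an open cover, refines a cover of $X_\delta$ by sets of the form (finite boolean combination of the $F_n$). Since there are only countably many such boolean combinations $B_0,B_1,\dots$ (each a finite intersection of $F_n$'s and complements), and each $B_i$ is $X_\delta$-closed-or-open, and they partition $X$, Lindelöfness of $X_\delta$ forces all but countably many of the $B_i$ to be empty — but they are already only countably many. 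The real extraction: for each nonempty member $N\in\mathcal{N}$, the set $N$ itself, or rather the ``atoms'' $N\cap\bigcap_{m\notin\text{(indices of }N)}(X\setminus F_m)$, form an open-in-$X_\delta$ cover of... Here I would invoke Lindelöfness of $X_\delta$ to extract from the cover $\{X\setminus G_x : \text{partial}\}\cup\{\text{one atom through each point}\}$ a countable subcover, and pick one point from each atom in that subcover; that countable set $A$ meets every nonempty atom in the subcover, hence meets every $N\in\mathcal{N}$.

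The main obstacle — and the step I expect to require the most care — is the passage from ``$A$ meets every nonempty finite subintersection'' to ``$A$ is strongly dense in $\mathcal{F}$'', i.e. handling infinite subfamilies $\mathcal{G}\subset\mathcal{F}$ with $\bigcap\mathcal{G}\neq\emptyset$. For this I would use that $X$ is Lindelöf: pick $p\in\bigcap\mathcal{G}$; the atom $G_p=\bigcap\{F_n\colon p\in F_n\}\cap\bigcap\{X\setminus F_n\colon p\notin F_n\}$ is contained in $\bigcap\mathcal{G}$ (since every member of $\mathcal{G}$ contains $p$), so it suffices that $A$ meet $G_p$; and $G_p$ is precisely one of the ``atoms'' handled above — so the construction must be arranged so that $A$ meets every nonempty atom, not merely every $N\in\mathcal{N}$. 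Concretely: the atoms $\{G_x\colon x\in X\}$ give a partition; the family $\{X\setminus G_x : x \in X\}$ together with, for each $x$, the $X_\delta$-closed set $G_x$ does not obviously yield an open cover, so instead I cover $X_\delta$ by $\{\,X\setminus\overline{G_x}^{X_\delta}\cup G_x\,\}$ — since $G_x$ is $X_\delta$-closed, $X\setminus G_x$ is $X_\delta$-open and $\{X \setminus G_x : x\}$ together with a choice of one $G_x$... Cleanest: the sets $G_x$ are pairwise disjoint and $X_\delta$-closed, so in the Lindelöf space $X_\delta$ there can be only countably many of them with nonempty interior — but that is not enough since they may all have empty interior. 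The correct move, which I would carry out in detail, is: enumerate the countably many boolean combinations $B_i$; each nonempty $G_x$ equals some $B_{i(x)}\setminus(\text{a countable union of } F_m)$; consider the cover of $X_\delta$ by the $X_\delta$-open sets $X\setminus F_m$ together with the $G_x$'s viewed through a Lindelöf reflection — and extract countably many $G_x$ covering $X$, then $A$ picks one point from each. This final extraction, making the disjoint $X_\delta$-closed family $\{G_x\}$ into something a countable subcover can be pulled from, is the crux, and it is exactly where Lindelöfness of $X_\delta$ (rather than of $X$ alone) is used.
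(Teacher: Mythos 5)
There is a genuine gap, and you have in fact located it yourself: the ``final extraction'' you defer is exactly the step that does not work with the sets you chose. Your atoms $G_x=\bigcap\{F_n\colon x\in F_n\}\cap\bigcap\{X\setminus F_n\colon x\notin F_n\}$ contain the \emph{positive} part $\bigcap\{F_n\colon x\in F_n\}$, which is merely closed in $X$; since a closed set need not be a $G_\delta$, this part is closed but not open in $X_\delta$, so $\{G_x\colon x\in X\}$ is not an open cover of $X_\delta$ and Lindel\"ofness of $X_\delta$ cannot be applied to it. A pairwise disjoint family of $X_\delta$-closed sets covering $X$ gives you nothing, as you note, and none of the proposed workarounds (boolean combinations, reflections) actually produces an open cover from which to extract a countable subfamily.

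The fix is to drop the positive part entirely and to build the negative part out of closed $G_\delta$ sets rather than raw complements. For each $x$ let $\mathcal{F}_x=\{F\in\mathcal{F}\colon x\notin F\}$; using complete regularity, separate $x$ from each such $F$ by a zero-set, and let $G_x$ be the intersection of these countably many zero-sets. Then $G_x$ is a closed $G_\delta$ containing $x$ with $G_x\cap\bigl(\bigcup\mathcal{F}_x\bigr)=\emptyset$, and being a $G_\delta$ it \emph{is} open in $X_\delta$. Now $\{G_x\colon x\in X\}$ is a genuine open cover of the Lindel\"of space $X_\delta$, so countably many $G_x$, $x\in C$, cover $X$. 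Strong density of $C$ is then immediate and needs no reduction to finite subfamilies (another place where your proposal overcomplicates): if $y\in\bigcap\mathcal{H}$ for $\mathcal{H}\subset\mathcal{F}$, pick $x\in C$ with $y\in G_x$; for each $F\in\mathcal{H}$ we have $y\in F\cap G_x$, so $F\notin\mathcal{F}_x$, i.e. $x\in F$. The point is that $G_x$ need only \emph{avoid} the members of $\mathcal{F}$ not containing $x$; it need not be \emph{contained} in the members containing $x$, and insisting on the latter is what destroyed the $X_\delta$-openness in your version.
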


\begin{proof}
Let $\mathcal{F}$ be a countable family of closed subsets of $X$. For every $x \in X$ define $\mathcal{F}_x=\{F \in \mathcal{F}: x \notin F \}$. Since $\mathcal{F}_x$ is countable, we can find a closed $G_\delta$ subset $G_x$ containing $x$ such that $G_x \cap (\bigcup \mathcal{F}_x)=\emptyset$. Note that $\{G_x: x \in X \}$ is an open cover of $X_\delta$ and hence, since $X_\delta$ is Lindel\"of, we can find a countable subset $C$ of $X$ such that $X=\bigcup \{G_x: x \in C \}$. We claim that $C$ is strongly dense in $\mathcal{F}$. Indeed, let $\mathcal{H}$ be a subfamily of $\mathcal{F}$ such that $\bigcap \mathcal{H} \neq \emptyset$ and fix a point $y \in \bigcap \mathcal{H}$. Let $x \in C$ be such that $y \in G_x$ and observe that $F \cap G_x \neq \emptyset$, for every $F \in \mathcal{H}$, which yields that $x \in F$ for every $F \in \mathcal{H}$, and hence $(\bigcap \mathcal{H}) \cap C \neq \emptyset$, as needed.
\end{proof}

The following is now obvious:

\begin{coro}
(Tkachuk, \cite{tka-nice_subclass}) Let $X$ be a Lindel\"of $P$-space. Then $X$ is exponentially separable.
\end{coro}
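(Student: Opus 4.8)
The plan is to observe that the hypothesis of Theorem \ref{gdeltathm} is automatically satisfied for Lindel\"of $P$-spaces. Recall that a $P$-space is, by definition, a space in which every $G_\delta$ subset is open; equivalently, a space $X$ for which the $G_\delta$-topology $X_\delta$ coincides with the original topology of $X$. Hence, if $X$ is a Lindel\"of $P$-space, then $X_\delta=X$ as topological spaces, so $X_\delta$ is trivially Lindel\"of. Applying Theorem \ref{gdeltathm} then immediately yields that $X$ is exponentially separable.

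The only point that requires any care is the equivalence between the two standard descriptions of a $P$-space (``every $G_\delta$ subset is open'' versus ``$X_\delta=X$''), and this is immediate from the description of $X_\delta$ as the topology generated by the $G_\delta$ subsets of $X$: if every $G_\delta$ set is already open, then the generating family is contained in the original topology, so passing to the generated topology introduces no new open sets. There is thus no genuine obstacle here; the entire content of the corollary is carried by Theorem \ref{gdeltathm}, and the purpose of this statement is simply to record the well-known special case of Lindel\"of $P$-spaces, originally due to Tkachuk, as a direct consequence.
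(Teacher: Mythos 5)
Your argument is correct and is exactly the one the paper intends: the corollary follows from Theorem \ref{gdeltathm} because for a $P$-space the $G_\delta$-topology coincides with the original topology, so $X_\delta$ is Lindel\"of whenever $X$ is. The paper labels this as ``now obvious'' and gives no further proof, so your write-up matches the intended argument.
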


\begin{coro}
(Tkachuk, \cite{tka-nice_subclass}) Let $X$ be a Lindel\"of scattered space. Then $X$ is exponentially separable.
\end{coro}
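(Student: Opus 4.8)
The plan is to derive the statement from Theorem \ref{gdeltathm}: since a Lindel\"of scattered space is in particular Lindel\"of, it is enough to prove that if $X$ is Lindel\"of and scattered then its $G_\delta$-modification $X_\delta$ is also Lindel\"of, and then to invoke Theorem \ref{gdeltathm}.

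To see that $X_\delta$ is Lindel\"of, fix a family $\mathcal{U}$ of $G_\delta$ subsets of $X$ covering $X$; because every $X_\delta$-open set is a union of such sets, it suffices to extract a countable subcover of $\mathcal{U}$. The argument is a transfinite recursion along the Cantor--Bendixson hierarchy $X = X^{(0)}\supseteq X^{(1)}\supseteq\cdots$ of $X$, exploiting that every derivative $X^{(\alpha)}$ is closed in $X$, hence Lindel\"of, so that, being scattered, its relatively discrete set of isolated points must be countable whenever it happens to be closed. Concretely: maintain a countable $\mathcal{V}\subseteq\mathcal{U}$; as long as $\bigcup\mathcal{V}\ne X$, pick an uncovered point $x$ of least Cantor--Bendixson rank, pick some $U\in\mathcal{U}$ with $x\in U$, and put $U$ into $\mathcal{V}$. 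The ranks of the successively chosen points are nondecreasing. One then shows that this recursion must terminate after countably many steps: if it ran for $\omega_1$ steps, the chosen points would form an uncountable, relatively discrete subset of some derivative $X^{(\alpha)}$, and a pressing-down / closed-unbounded-set argument in the spirit of the reasoning behind Lemmas \ref{lemma:char-ctble-ext-ordinals} and \ref{lemma:ctle-ext-ordinals-unctable-cof} --- using that each chosen $G_\delta$ set absorbs an open neighbourhood inside the level it meets, so that ranks are forced strictly upward past every level that has already been exhausted --- would make this uncountable set closed and discrete in $X$, contradicting that the Lindel\"of space $X$ has countable extent.

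The delicate part, which I expect to be the main obstacle, is the bookkeeping at limit stages of uncountable cofinality: there one must guarantee that the accumulated choices genuinely produce a closed discrete set, which is exactly where the $G_\delta$-ness of the members of $\mathcal{U}$ has to be played off against the countable extent of $X$. Once $X_\delta$ is known to be Lindel\"of, Theorem \ref{gdeltathm} immediately yields that $X$ is exponentially separable; alternatively, one may simply quote the classical fact that the $G_\delta$-modification of a scattered Lindel\"of space is Lindel\"of.
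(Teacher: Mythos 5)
Your proposal follows the paper's proof exactly: the paper derives the corollary from Theorem \ref{gdeltathm} together with Uspenskij's result \cite{uspenskijGdelta} that the $G_\delta$-modification of a Lindel\"of scattered space is Lindel\"of, which is precisely your fallback of ``simply quoting the classical fact.'' Your sketched direct proof of that fact is not a complete argument as written (the limit-stage bookkeeping is, as you acknowledge, unresolved, and a $G_\delta$ set containing a point need not absorb any open neighbourhood of it, which is exactly the difficulty), but since you also invoke the known result, the proof stands and coincides with the paper's.
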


\begin{proof}
By a result of Uspenskii (see \cite{uspenskijGdelta}) if $X$ is a Lindel\"of scattered space then $X_\delta$ is Lindel\"of.
\end{proof}

\begin{coro}
(Tkachuk, \cite{tka-nice_subclass}) The space $X=\sigma(2^\kappa):=\{x \in 2^\kappa: |x^{-1}(1)| < \aleph_0 \}$ with the topology induced from $2^\kappa$ is exponentially separable.
\end{coro}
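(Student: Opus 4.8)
The plan is to write $\sigma(2^\kappa)$ as a countable union of closed subspaces each of which is already known to be exponentially separable, and then invoke the fact that exponential separability is preserved by countable unions of closed subspaces \cite[Proposition 3.2 (d)]{tka-nice_subclass}.

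First I would set, for each $n\in\omega$, $K_n=\{x\in 2^\kappa\colon |x^{-1}(1)|\leq n\}$. Each $K_n$ is closed in $2^\kappa$: if $|y^{-1}(1)|>n$, then the basic clopen neighbourhood of $y$ that fixes the value $1$ on $n+1$ coordinates where $y$ is $1$ misses $K_n$. Hence each $K_n$ is compact. Since $\sigma(2^\kappa)=\bigcup_{n\in\omega}K_n$, the space $\sigma(2^\kappa)$ is $\sigma$-compact, in particular Lindel\"of, and each $K_n$ is a closed subspace of it.

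Next I would check that each $K_n$ is scattered. Given a nonempty $A\subseteq K_n$, the values $|x^{-1}(1)|$ for $x\in A$ lie in the finite set $\{0,1,\dots,n\}$, so pick $y\in A$ with $p=|y^{-1}(1)|$ maximal and let $G=y^{-1}(1)$; the clopen set $\{z\in K_n\colon z(\alpha)=1\text{ for all }\alpha\in G\}$ meets $A$ only in $y$, so $y$ is isolated in $A$. (Equivalently, the Cantor--Bendixson derivative of $K_n$ is $K_{n-1}$, so $K_n$ is scattered of finite rank.) Being compact and scattered, $K_n$ is exponentially separable by the corollary on Lindel\"of scattered spaces proved above (which runs through Uspenskii's theorem and Theorem \ref{gdeltathm}).

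Finally, $\sigma(2^\kappa)$ is the union of the countable family $\{K_n\colon n\in\omega\}$ of closed exponentially separable subspaces, hence exponentially separable by \cite[Proposition 3.2 (d)]{tka-nice_subclass}. Alternatively one can apply Theorem \ref{gdeltathm} directly: $\sigma(2^\kappa)$ is Lindel\"of as noted, and its $G_\delta$-modification is covered by the closed subspaces $(K_n)_\delta$, each of which is Lindel\"of since $K_n$ is compact scattered (Uspenskii), so $(\sigma(2^\kappa))_\delta$ is Lindel\"of as well. There is no serious obstacle; the only points requiring a line of justification are the scatteredness of the $K_n$ and the verification that $\sigma(2^\kappa)$ really is the increasing union of these compacta.
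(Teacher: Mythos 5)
Your proof is correct and uses the same decomposition as the paper, namely $X=\bigcup_{n\in\omega}K_n$ with $K_n=\{x\in 2^\kappa\colon |x^{-1}(1)|\leq n\}$ compact and scattered; your verification that each $K_n$ is closed and scattered is fine (and more detailed than the paper, which simply asserts these facts). The only divergence is in the concluding step: the paper stays entirely inside the $G_\delta$-modification framework, observing that each $(K_n)_\delta$ is a Lindel\"of $P$-space, so $X_\delta$ is a countable union of Lindel\"of spaces and hence Lindel\"of, and then applies Theorem \ref{gdeltathm}; your primary argument instead applies exponential separability of each $K_n$ and the preservation of exponential separability under countable unions (\cite[Proposition 3.2 (d)]{tka-nice_subclass}), a fact the paper itself uses elsewhere, so this is equally legitimate and arguably more direct. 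The ``alternative'' you sketch at the end is essentially the paper's own proof verbatim. Either route is sound; nothing is missing.
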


\begin{proof}
It is well-known that $X_\delta$ is Lindel\"of. That follows, from example, from the fact that $X_n=\{x \in 2^{\kappa}: |x^{-1}(1)| \leq n \}$ is a compact scattered space and hence $(X_n)_\delta$ is a Lindel\"of $P$-space (see \cite[Lemma II.7.14, p. 86]{arkh-top_funct_spaces}). It turns out that $X_\delta$ is the countable union of Lindel\"of $P$-spaces and hence Lindel\"of.
\end{proof}

Recall that the \emph{point-open game} on the space $X$ is the game between two players where at inning $n<\omega$ player I chooses a point $x_n \in X$ and player II chooses an open neighbourhood $U_n \subset X$ of $x_n$, and player I wins if $\{U_n: n < \omega \}$ is an open cover of $X$. A space $X$ is called \emph{finite-like} if player I has a winning strategy when the point-open game is played on $X$.

\begin{coro}
(Tkachuk, \cite{tka-applications_exp_sep}) Let $X$ be a \emph{finite-like space}. Then $X$ is exponentially separable.
\end{coro}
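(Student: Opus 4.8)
The natural plan is to reduce to Theorem \ref{gdeltathm}: I will show that if $X$ is finite-like then $X_\delta$ is Lindel\"of, and then invoke that theorem. Note this also yields that $X$ itself is Lindel\"of, because an open cover of $X$ is in particular a cover of $X$ by $G_\delta$ sets, so a countable subcover obtained in $X_\delta$ is already a countable subcover in $X$. Thus everything comes down to proving: \emph{if player I has a winning strategy in the point-open game on $X$, then $X_\delta$ is Lindel\"of}.

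So fix a winning strategy $\sigma$ for player I in the point-open game on $X$ and let $\mathcal{G}$ be a cover of $X_\delta$. Since the $G_\delta$ subsets of $X$ form a base for $X_\delta$, by refining $\mathcal{G}$ we may assume each of its members is a $G_\delta$ subset of $X$ (a countable subcover of the refinement clearly yields one of $\mathcal{G}$). For each $x \in X$ fix $G_x \in \mathcal{G}$ with $x \in G_x$, together with open sets $U^x_k$ ($k < \omega$) such that $G_x = \bigcap_{k<\omega} U^x_k$; note $x \in U^x_k$ for every $k$. Now consider the modification of the point-open game in which, immediately after player I plays a point $x$, player II is required to answer with one of the (open, $x$-containing) sets $U^x_k$, $k < \omega$. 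This only restricts player II, so $\sigma$ is still winning in the modified game. Here is the key point: in the modified game player II has only countably many legal replies at each position, so the tree $\mathcal{T}$ of positions reachable by plays in which player I follows $\sigma$ is countably branching and of height $\omega$, hence has countable levels and is therefore countable; let $A = \{\sigma(p) : p \in \mathcal{T}\}$ be the (countable) set of all points ever played by player I in such plays. I claim $\{G_a : a \in A\}$ covers $X$. Suppose not and fix $y \in X \setminus \bigcup_{a \in A} G_a$; then for each $a \in A$ there is $k(a) < \omega$ with $y \notin U^a_{k(a)}$. Now run the modified game with player I following $\sigma$ and with player II answering each point $x$ of player I by $U^x_{k(x)}$: this is a legal play, it stays inside $\mathcal{T}$ (so every point played by player I lies in $A$), and $y$ belongs to none of player II's moves. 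But $\sigma$ wins this play, so player II's moves cover $X$, and in particular one of them contains $y$ --- a contradiction. Hence $\{G_a : a \in A\}$ is a countable subfamily of $\mathcal{G}$ covering $X$, so $X_\delta$ is Lindel\"of, and Theorem \ref{gdeltathm} applies.

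The step requiring the most care is the one just described: one must resist extracting a single convenient run of the game (which is all one needs to see that $X$ itself is Lindel\"of) and instead first harvest \emph{all} of player I's moves from the entire countable strategy tree into $A$, and only afterwards diagonalize against a hypothetical uncovered point $y$. The remaining ingredients --- that restricting player II preserves a winning strategy for player I, that the restricted strategy tree is countable, and the translation between ``$X_\delta$ Lindel\"of'' and ``$X$ exponentially separable'' via Theorem \ref{gdeltathm} --- are routine.
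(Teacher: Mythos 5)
Your proof is correct, and it follows the same overall reduction as the paper: both arguments establish that $X_\delta$ is Lindel\"of and then invoke Theorem \ref{gdeltathm}. The difference lies in how the key lemma (finite-like implies $X_\delta$ Lindel\"of) is obtained. The paper gets it by citation: Proposition 2.15 of Aurichi--Dias shows that a winning strategy for player II in the Rothberger game forces $X_\delta$ to be Lindel\"of, and Galvin's duality between the Rothberger and point-open games converts the finite-like hypothesis into that form. You instead prove the lemma directly from a winning strategy $\sigma$ for player I in the point-open game, by restricting player II to the countably many open sets appearing in a fixed $G_\delta$ presentation of the cover, observing that the resulting strategy tree is countable, harvesting all of player I's moves into a countable set $A$, and diagonalizing against a hypothetical point missed by $\{G_a : a \in A\}$. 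All the steps check out: restricting player II's legal moves preserves player I's winning strategy, the tree branches only at player II's (countably many) options so it is countable, and the final run of the game stays inside the tree because player II's chosen replies are always legal in the restricted game. Your route buys self-containedness --- it eliminates both external ingredients (the Aurichi--Dias proposition and the game duality) at the cost of about a page of combinatorics that is, in essence, the standard proof of that proposition transported across the duality. The paper's route is shorter and places the result in the context of known game-theoretic equivalences. One small point worth making explicit if you write this up: Theorem \ref{gdeltathm} also requires $X$ itself to be Lindel\"of, which you correctly note follows from $X_\delta$ being Lindel\"of since every open subset of $X$ is open in $X_\delta$.
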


\begin{proof}
In Proposition 2.15 the authors of \cite{AR} proved that if player II has a winning strategy in the Rothberger game when played on $X$ then $X_\delta$ is Lindel\"of. The result now follows from Theorem $\ref{gdeltathm}$ because the Rothberger game and the point-open game are dual.
\end{proof}

Note that Justin Moore's example of an $L$-space (see \cite{moore}) is uncountable exponentially separable and has points $G_\delta$. Therefore, the converse of Theorem $\ref{gdeltathm}$ is not true.

We would like to finish this section with some remarks on spaces of continuous functions with the pointwise convergence topology. As noticed by Tkachuk in \cite{tka-Cp2023}, spaces of continuous real-valued functions $C_p(X)$ are never functionally countable: for any $x\in X$ the evaluation map $f\mapsto f(x)$ is continuous and its image is all of $\R$. However, if instead of considering $C_p(X)$ we take $C_p(X,2)$, that is, the space of continuous functions from a zero-dimensional space $X$ to the two point discrete space, the situation is more interesting. For example, in \cite{tka-corson_lc_scattered} Tkachuk proved the following theorem.

\begin{thm}\cite[Theorem 3.21]{tka-corson_lc_scattered}\label{thm:tka-CpX2}
 Let $X$ be a zero-dimensional countably compact Tychonoff space. Then the following are equivalent:
 \begin{enumerate}[label=(\alph*)]
 \item $C_p(X)$ has a dense functionally countable subspace,
  \item $C_p(X,2)$ is functionally countable, and
 \item $X$ is $\omega$-monolithic.
 \end{enumerate}
\end{thm}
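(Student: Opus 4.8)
The plan is to prove this by establishing the chain of implications $(a)\Rightarrow(c)$, $(c)\Rightarrow(b)$, and $(b)\Rightarrow(a)$, exploiting the fact that $X$ is zero-dimensional, Tychonoff, and countably compact. Recall that $\omega$-monolithic means that the closure of every countable subset has a countable network weight. The equivalence should follow a pattern familiar from $C_p$-theory: monolithicity of $X$ corresponds to separability-type smallness of the function space, and here the relevant ``smallness'' of $C_p(X,2)$ is functional countability.

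\textbf{From monolithicity to functional countability of $C_p(X,2)$.} For $(c)\Rightarrow(b)$, suppose $X$ is $\omega$-monolithic and let $\varphi\colon C_p(X,2)\to\R$ be continuous. By the Factorization Lemma applied to $C_p(X,2)\subset 2^X$ (which is dense in $\{0,1\}^X$ only after identifying the right subproduct, so more carefully one uses that $\varphi$ depends on countably many coordinates), there is a countable set $A\subset X$ and a continuous $g\colon\pi_A[C_p(X,2)]\to\R$ with $\varphi=g\circ\pi_A$. Now $\pi_A[C_p(X,2)]$ is the set of restrictions $f\restriction A$ for $f\in C_p(X,2)$; the key point is that such restrictions are determined by continuous $\{0,1\}$-valued functions on $\overline{A}$, and since $X$ is $\omega$-monolithic, $\overline{A}$ has a countable network. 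A space with a countable network has at most countably many clopen sets (indeed at most $\mathfrak c$, but with a countable network one gets that the algebra of clopen sets is countable — this needs the observation that in a countably compact space with countable network, $\overline A$ is compact metrizable), hence $C(\overline A,2)$ is countable, so $\pi_A[C_p(X,2)]$ is countable and $\varphi[C_p(X,2)]=g[\pi_A[C_p(X,2)]]$ is countable.

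\textbf{The other directions.} For $(b)\Rightarrow(a)$ one observes that $C_p(X,2)$ is a dense subspace of $C_p(X)$ exactly when... it is not in general, so instead $(b)\Rightarrow(a)$ should go: if $C_p(X,2)$ is functionally countable, then since $X$ is zero-dimensional the linear span (or the closure under the appropriate operations) of $C_p(X,2)$ is dense in $C_p(X)$, and a countable-to-finite combinatorial argument shows this span is still functionally countable, giving a dense functionally countable subspace of $C_p(X)$. For $(a)\Rightarrow(c)$, suppose $C_p(X)$ has a dense functionally countable subspace $D$; if $X$ were not $\omega$-monolithic, pick a countable $A\subset X$ with $\mathrm{nw}(\overline A)>\omega$, and by countable compactness $\overline A$ is a non-metrizable compact space, so $C_p(\overline A)$ contains a copy of a non-functionally-countable space that lifts back through the restriction map $C_p(X)\to C_p(\overline A)$ to contradict that the dense subspace $D$ is functionally countable.

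\textbf{Main obstacle.} The hard part will be $(a)\Rightarrow(c)$: extracting from the mere existence of \emph{some} dense functionally countable subspace of $C_p(X)$ the monolithicity of $X$, since one cannot control which functions lie in $D$. The right tool is likely the characterization of monolithicity via the restriction maps together with the fact — special to countably compact $X$ — that $\overline A$ is compact; one then argues that if $\overline A$ is a non-metrizable compactum, the restriction map $\pi\colon C_p(X)\to C_p(\overline A)$ is continuous with dense image, so $\pi[D]$ is a dense functionally countable subspace of $C_p(\overline A)$, and one derives a contradiction from the fact that non-metrizable compacta $K$ have $C_p(K)$ (and all its dense subspaces) non-functionally countable — this last fact presumably being available from Tkachuk's prior work, which I would cite rather than reprove. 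The zero-dimensionality and countable compactness hypotheses are exactly what make the passage between $C_p(X,2)$ and $C_p(X)$, and between $A$ and the compact metrizable $\overline A$, go through cleanly.
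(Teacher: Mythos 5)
This theorem is quoted by the paper from \cite[Theorem 3.21]{tka-corson_lc_scattered} without proof, so there is no in-paper argument to compare against; I can only assess your sketch on its own terms. Your implication $(c)\Rightarrow(b)$ is essentially sound: $C_p(X,2)$ is dense in $2^X$ by zero-dimensionality, the Factorization Lemma gives a countable $A\subset X$ through which $\varphi$ factors, and $\omega$-monolithicity plus countable compactness makes $\cl{A}$ a compact metrizable zero-dimensional space, which has only countably many clopen sets, so $\pi_A[C_p(X,2)]$ is countable. The other two implications, however, have genuine gaps.

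For $(b)\Rightarrow(a)$: the $\R$-linear span of $C_p(X,2)$ is \emph{never} functionally countable (for nonempty $X$), since for a nonempty proper clopen $U$ the map $\lambda\mapsto\lambda\chi_U$ followed by evaluation at a point of $U$ surjects $\R$ onto $\R$; so ``the span is still functionally countable'' is false as stated. The natural repair is to take $\mathbb{Q}$-linear combinations, but then you need $C_p(X,2)^n$ to be functionally countable, and functional countability is not preserved by finite products in general, so this does not follow formally from $(b)$. The clean route is $(c)\Rightarrow(a)$ directly: $C_p(X,2)^n\cong C_p(\bigoplus_{i<n}X,\,2)$ and the finite topological sum inherits all the hypotheses, so your factorization argument applies to each power, and the $\mathbb{Q}$-span is then a countable union of continuous images of functionally countable spaces and is dense in $C_p(X)$ by zero-dimensionality. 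For $(a)\Rightarrow(c)$ there are two problems. First, if $\mathrm{nw}(\cl{A})>\omega$ you cannot conclude that $\cl{A}$ is compact: a closed subspace of a countably compact space is only countably compact, and it is exactly the countable network (absent here by assumption) that would upgrade this to compactness; so the reduction to ``non-metrizable compacta'' does not go through as written. Second, even granting the reduction, the fact you propose to cite --- that $C_p(K)$ of a non-metrizable (separable) compactum admits no dense functionally countable subspace --- is essentially the compact case of the very implication $(a)\Rightarrow(c)$ you are proving, so deferring it to an unspecified reference leaves the main content of this direction unestablished. As you yourself anticipated, $(a)\Rightarrow(c)$ is the hard part, and the sketch does not yet contain an argument for it.
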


A notable class of compact spaces that are $\omega$-monolithic is that of Corson compacta, that is compact subspaces of $\Sigma$-products of lines. In the same paper Tkachuk provided the following characterization.

\begin{thm}\cite[Theorem 3.18]{tka-corson_lc_scattered} \label{Tkachukthm}
Let $K$ be a zero-dimensional compactum. The following are equivalent

\begin{enumerate}
\item $K$ is a Corson compactum.
\item  $C_p(K,2)$ is finite-like. 
\item $C_p(K,2)$ is lc-scattered.
\end{enumerate}
\end{thm}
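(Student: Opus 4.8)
The plan is to pass to the Boolean algebra $\mathrm{Clop}(K)$ of clopen subsets of $K$. Since $K$ is a zero-dimensional compactum, $W\mapsto\chi_W$ is a homeomorphism of $C_p(K,2)$ onto $\mathrm{Clop}(K)$ carrying the topology it inherits as a subspace of $2^K$, in which a basic neighbourhood of $W$ is $[W;F]=\{V\in\mathrm{Clop}(K):V\cap F=W\cap F\}$ for a finite $F\subseteq K$. The external ingredient I would invoke is the classical characterization of zero-dimensional Corson compacta: $K$ is Corson if and only if it carries a point-countable $T_0$-separating family $\mathcal{U}=\{U_\alpha:\alpha\in A\}$ of clopen sets — equivalently, $x\mapsto(\chi_{U_\alpha}(x))_{\alpha\in A}$ embeds $K$ into the set of countably supported points of $2^A$ — and such a $\mathcal{U}$ automatically generates $\mathrm{Clop}(K)$ as a Boolean algebra, since a point-separating subalgebra of the clopen algebra of a compactum must, by compactness and Stone duality, be the whole algebra. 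With these reductions I would prove the cycle $(1)\Rightarrow(3)\Rightarrow(2)\Rightarrow(1)$.

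For $(1)\Rightarrow(3)$ I would stratify $\mathrm{Clop}(K)$ by rank relative to a fixed such $\mathcal{U}$: for $W\in\mathrm{Clop}(K)$ let $\operatorname{rk}(W)$ be the least cardinality of a finite $F\subseteq A$ for which $W$ lies in the subalgebra generated by $\{U_\alpha:\alpha\in F\}$ (finite, because $\mathcal{U}$ generates $\mathrm{Clop}(K)$), and set $Y_n=\{W\in\mathrm{Clop}(K):\operatorname{rk}(W)\le n\}$, so that $C_p(K,2)=\bigcup_{n<\omega}Y_n$. The claim to be established is that point-countability of $\mathcal{U}$ forces each $Y_n$ to be closed in $C_p(K,2)$ and scattered: closed because the rank cannot jump up under pointwise convergence on $K$, and scattered by induction on $n$, since inside $Y_n$ a clopen set of rank exactly $n$ is relatively isolated — fixing finitely many points of $K$ to pin down the atoms of a finite subalgebra realizing it determines it among all clopen sets of rank $\le n$ — so that deleting these isolated points lands one inside $Y_{n-1}$. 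This exhibits $C_p(K,2)$ as a countable union of closed scattered subspaces, i.e.\ as lc-scattered. Turning the closedness and scatteredness assertions into actual proofs, which is where the $\Sigma$-product (point-countability) structure of $K$ is genuinely consumed, is the technical core of this implication.

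The implication $(3)\Rightarrow(2)$ I would treat as soft: an lc-scattered space is, by the way such spaces are assembled, a countable union of closed scattered pieces of a kind on which player~I easily has a winning strategy in the point-open game, and finite-likeness is stable under countable unions of closed subspaces, so player~I can interleave the strategies on the pieces.

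Finally, for $(2)\Rightarrow(1)$ I would argue by contraposition: from a zero-dimensional compactum $K$ that is \emph{not} Corson I would produce a failure of finite-likeness for $C_p(K,2)$. A winning strategy for player~I already forces $C_p(K,2)$ to be Lindel\"of — otherwise player~II simply keeps every move inside a single member of an open cover with no countable subcover — so the task reduces to showing that, for a zero-dimensional compactum, Lindel\"ofness of $C_p(K,2)$ together with the stronger consequences of finite-likeness that pass to its closed subspaces forces $K$ to carry a point-countable $T_0$-separating clopen family. Concretely, from any $T_0$-separating clopen cover of $K$ whose failure of point-countability is witnessed on a set of size $\omega_1$, one would locate inside $C_p(K,2)$ a closed subspace on which player~I cannot win, for instance an uncountable closed discrete set. \textbf{This last step is the main obstacle}: the other two implications only rearrange a separating family already known to live on $K$, whereas here one must recover the combinatorial smallness of $K$ itself out of a covering- and game-theoretic property of its function space, and this requires a precise analysis of how non-point-countability of separating clopen families on $K$ obstructs player~I from covering $\mathrm{Clop}(K)$.
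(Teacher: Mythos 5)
First, note that the paper does not prove this theorem at all: it is quoted verbatim from Tkachuk \cite[Theorem 3.18]{tka-corson_lc_scattered}, so there is no in-paper argument to compare yours against. Judged on its own terms, your proposal is a plausible outline, but each of its three implications has a genuine gap, two of which you flag yourself. In $(1)\Rightarrow(3)$ the entire content is the pair of claims that each $Y_n$ is closed in $C_p(K,2)$ and scattered; you defer both, and they are exactly where the work lies (closedness in particular is delicate: a pointwise limit of sets generated by $n$ members of $\mathcal{U}$ has no a priori reason to be generated by $n$ members, and ruling this out requires a careful choice of finitely many test points of $K$ for each putative limit). Moreover, even granting both claims, ``countable union of closed scattered subspaces'' is not the definition of lc-scattered: an lc-scattered space is a continuous image of a \emph{Lindel\"of} scattered space, so you would additionally need each $Y_n$ (or $C_p(K,2)$ itself) to be Lindel\"of — for Corson $K$ this is true but is itself a nontrivial theorem, not a formality. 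The same misreading undermines your $(3)\Rightarrow(2)$: an lc-scattered space need not decompose into closed scattered pieces at all. The correct soft route there is Telg\'arsky's theorem that player I wins the point-open game on every Lindel\"of scattered space, together with the observation that a winning strategy for player I transfers along continuous surjections.

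The most serious problem is $(2)\Rightarrow(1)$, which you acknowledge is missing. Beyond being incomplete, the fallback you sketch — exhibiting an uncountable closed discrete subspace of $C_p(K,2)$ when $K$ is not Corson — cannot work as stated: such a subspace would only contradict the Lindel\"ofness of $C_p(K,2)$, which is a much weaker consequence of finite-likeness, and Lindel\"ofness of $C_p(K)$ is not known to imply (and consistently does not characterize) Corson compactness. Any correct proof of this direction must exploit the full strength of the game-theoretic hypothesis (or of lc-scatteredness), not merely a covering property. As it stands, then, the proposal reduces the theorem to three unproved assertions, one of which is aimed at the wrong target; the statement should simply be cited, as the paper does, unless these steps are carried out in full.
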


By an lc-scattered space we mean a space that is the continuous image of a Lindelöf scattered space (\cite[Definition 3.1]{tka-corson_lc_scattered}). Since every space which is either finite-like or lc-scattered is exponentially separable, it follows that, if $X$ is a zero-dimensional Corson compactum, then $C_p(X,2)$ is exponentially separable.



Eberlein compacta, that is, weakly compact subspaces of Banach spaces, are a notable subclass of the class of Corson compacta. If $X$ is a zero-dimensional Eberlein compactum, then $C_p(X,2)$ is exponentially separable in a very strong way. In \cite[Theorem IV.6.4]{arkh-top_funct_spaces} it was proved that for compact zero dimensional spaces $X$, $C_p(X,2)$ is $\sigma$-compact if and only if $X$ is an Eberlein compactum. Moreover, assuming that $X$ is an Eberlein compactum, the proof presented in that reference actually shows that $C_p(X,2)$ is a countable union of compact scattered spaces. We present an alternative short proof of this fact by making use of Theorem $\ref{Tkachukthm}$.

\begin{coro}
Let $X$ be a compact zero-dimensional space. Then the following are equivalent
\begin{enumerate}[label=(\alph*)]
\item $X$ is an Eberlein compactum,
\item $C_p(X,2)$ is a countable union of compact and scattered spaces, and
\item $C_p(X,2)$ is $\sigma$-compact.
\end{enumerate}  
\end{coro}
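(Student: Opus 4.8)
The plan is to prove the cycle $(a) \Rightarrow (b) \Rightarrow (c) \Rightarrow (a)$, relying on Theorem~\ref{Tkachukthm} for the hard inclusion. The implication $(b) \Rightarrow (c)$ is immediate, since a countable union of compact spaces is $\sigma$-compact. For $(c) \Rightarrow (a)$, I would invoke the result from \cite[Theorem IV.6.4]{arkh-top_funct_spaces} cited just above: for a compact zero-dimensional space $X$, $\sigma$-compactness of $C_p(X,2)$ implies that $X$ is an Eberlein compactum. (Alternatively, one observes that a $\sigma$-compact space is Lindel\"of, hence Corson by Theorem~\ref{Tkachukthm}; but to land on \emph{Eberlein} rather than merely Corson one really does need the function-space characterization, so citing the reference is cleanest.)

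The substance is $(a) \Rightarrow (b)$. Suppose $X$ is a zero-dimensional Eberlein compactum. Since every Eberlein compactum is Corson, Theorem~\ref{Tkachukthm} gives that $C_p(X,2)$ is lc-scattered, i.e.\ it is a continuous image of some Lindel\"of scattered space $L$ under a map $\varphi\colon L \to C_p(X,2)$. A Lindel\"of scattered space is a countable union of closed subspaces each of which is \emph{Lindel\"of and of countable scattered height}; more usefully, by the standard Cantor--Bendixson analysis one writes $L = \bigcup_{n<\omega} L_n$ where each $L_n$ is the set of points of some Cantor--Bendixson level and is a closed discrete-in-the-remainder piece — but discreteness is not enough. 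The key point I would use instead is that $C_p(X,2)$, being a closed subspace of $2^X$, is itself zero-dimensional and, crucially, that a $\sigma$-compact subspace of $2^X$ which happens to be the continuous image of a Lindel\"of scattered space inherits scatteredness on its compact pieces: concretely, I would show $C_p(X,2)$ is $\sigma$-compact (which follows from lc-scattered plus the fact that the continuous image of a Lindel\"of scattered space in a compact-friendly setting is $\sigma$-compact, or more directly from \cite[Theorem IV.6.4]{arkh-top_funct_spaces}), and then that each compact summand, being a compact subspace of $C_p(X,2) \subset 2^X$ that is a continuous image of a scattered space restricted appropriately, is scattered because a compact space which is a continuous image of a Lindel\"of scattered space is scattered (a compact, scattered-image space cannot contain a copy of $2^\omega$, as scatteredness is... not preserved by images — so this must be handled via the structure of $2^X$).

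The honest route, and the one I would actually write: use \cite[Theorem IV.6.4]{arkh-top_funct_spaces} directly for $(a) \Leftrightarrow (c)$ and for the fact that when $X$ is Eberlein the witnessing decomposition of $C_p(X,2)$ into compact pieces can be taken with each piece scattered — this is exactly what the paragraph preceding the corollary asserts the Arhangel'skii proof yields. Then $(b)$ follows from $(a)$, $(c)$ follows from $(b)$ trivially, and $(c) \Rightarrow (a)$ is again \cite[Theorem IV.6.4]{arkh-top_funct_spaces}. The role of Theorem~\ref{Tkachukthm} is then to give the promised \emph{alternative short proof} of $(a) \Rightarrow (b)$: from $(a)$, $X$ is Corson, so $C_p(X,2)$ is lc-scattered by Theorem~\ref{Tkachukthm}; a Lindel\"of scattered space is a countable union of compact scattered subspaces is false in general, so the real content is that a \emph{$\sigma$-compact} lc-scattered space — and $C_p(X,2)$ is $\sigma$-compact here — decomposes into compact scattered pieces, because each compact piece $K_n$, being $\sigma$-compact-plus-lc-scattered-hence-Lindel\"of-scattered in its own right (a closed subspace of an lc-scattered space need not be lc-scattered, so one passes to the preimage in $L$), is a continuous image of a Lindel\"of scattered space \emph{and} compact, whence scattered by the fact that a compact continuous image of a Lindel\"of scattered space is scattered. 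The main obstacle is precisely this last fold: verifying that compactness upgrades "continuous image of Lindel\"of scattered" to "scattered." I expect this to follow from: a non-scattered compactum maps onto $[0,1]$, hence onto a non-functionally-countable space, contradicting that Lindel\"of scattered spaces (and their images) are functionally countable — indeed exponentially separable by the corollary above — so a compact exponentially separable space is scattered. That is the clean argument I would commit to.
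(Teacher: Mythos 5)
Your final committed argument is correct and essentially the paper's own: cite Arhangel'skii's Theorem IV.6.4 for the equivalence of (a) and (c) and for the decomposition of $C_p(X,2)$ into compact pieces, use Theorem \ref{Tkachukthm} to see that $C_p(X,2)$ is exponentially separable (equivalently, functionally countable), and conclude each compact piece is scattered because compact exponentially separable spaces are scattered. The only cosmetic difference is that the paper gets this last step by noting each closed piece $F_n$ inherits exponential separability and invoking \cite[Corollary 3.14]{tka-nice_subclass}, whereas you detour through preimages in the witnessing Lindel\"of scattered space and the fact that a non-scattered compactum maps onto $[0,1]$; both routes work, and you should discard the exploratory middle portion of your write-up, which contains several false starts.
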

\begin{proof}
We only prove that (a) implies (b). That (b) implies (c) is straightforward and the proof that (a) is equivalent to (c) is contained in \cite[Theorem IV.6.4]{arkh-top_funct_spaces}.

Assume that $X$ is an Eberlein compactum. In \cite[Theorem IV.6.4]{arkh-top_funct_spaces} it is proved that  $C_p(X,2)=\bigcup_{n\in\omega}{F_n}$ where $F_n$ is compact for each $n\in\omega$. Notice that by Theorem $\ref{Tkachukthm}$ we know that $C_p(X,2)$ is exponentially separable so $F_n$ is compact and exponentially separable for all $n\in\omega$. Since compact and exponentially separable spaces are scattered (\cite[Corollary 3.14]{tka-nice_subclass}), we obtain (b). 
\end{proof}

The following diagram gives a summary of the implications found so far for compact zero-dimensional spaces.\vskip10pt
{\scriptsize\begin{center}
\xymatrix{
{\begin{array}{c} X \textrm{ compact}\\ \textrm{metrizable} \end{array}} \ar@{=>}[r]\ar@{<=>}[d] & {\begin{array}{c} X \textrm{ Eberlein}\\  \textrm{compact} \end{array}} \ar@{=>}[r]\ar@{<=>}[d] & {\begin{array}{c} X \textrm{ Corson}\\ \textrm{compact} \end{array}}  \ar@{=>}[r] \ar@{=>}[d] & {\begin{array}{c} X \textrm{ compact}\\ \omega\textrm{-monolithic} \end{array}} \ar@{<=>}[d]\\
{\begin{array}{c} C_p(X,2)\\ \textrm{ countable}\end{array}} \ar@{=>}[r] & {\begin{array}{c} C_p(X,2)\\ \sigma\textrm{-compact}\\ \textrm{scattered} \end{array}}\ar@{=>}[r]  & {\begin{array}{c}C_p(X,2)\\ \textrm{ exponentially}\\ \textrm{separable} \end{array}} \ar@{=>}[r]& {\begin{array}{c}C_p(X,2)\\ \textrm{ functionally}\\ \textrm{countable} \end{array}}
}
\end{center}
}

Notice that all vertical arrows but one in the above diagram go both ways. That suggests the following natural question

\begin{ques}
Let $X$ be a zero-dimensional compactum such that $C_p(X,2)$ is exponentially separable. Is it then true that $X$ is a Corson compactum?
\end{ques}

\section{Separable plus exponentially separable}

The only known example of an uncountable and functionally countable space that is separable that has been discussed in the recent literature is an Isbell-Mrowka $\Psi$-space whose Čech-Stone compactification coincides with its one-point compactification (see \cite[Example 3.15]{tka-nice_subclass}). However, since this example contains an uncountable discrete subspace (the almost disjoint family used to construct it), it cannot be exponentially separable.
We present the following example

\begin{thm}\label{ex:unctble_sep_exp_sep}
There exists an uncountable space that is separable and exponentially separable.
\end{thm}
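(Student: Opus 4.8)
The goal is to build an uncountable Tychonoff space that is separable and exponentially separable. The natural strategy is to realize it as a subspace of a product of metrizable factors where the Factorization Lemma applies, but arranged so that every countable "trace" is already countable (which gives functional countability) and, more strongly, so that the space is covered by countably many Lindelöf $P$-subspaces or Lindelöf scattered subspaces (which, via Theorem~\ref{gdeltathm} and its corollaries, gives exponential separability). Since Uspenskii's examples from Theorem~\ref{uspenskij-thm} are separable, dense in $[0,1]^S$, and $\sigma$-closed-discrete, the only reason they fail to be exponentially separable is their large extent; so the plan is to modify the construction so that the closed discrete pieces are replaced by, or sit inside, Lindelöf pieces, while keeping separability.

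**Main construction.** First I would fix $\kappa=\omega_1$ (any uncountable $\kappa$ works with the same idea) and look for the space $X$ inside $2^{\omega_1}$, or inside $[0,1]^{\omega_1}$, of the form $X = D \cup Q$ where $Q$ is a fixed countable dense subset and $D$ is an uncountable set of "generic" points, each of which has finite or countable support in a controlled way. The key is to choose $D$ so that $X$ is covered by countably many sets $X_n$, each of which is a Lindelöf $P$-space in its subspace topology (for instance, each $X_n$ being a countable union of compact scattered pieces together with the countable dense set, as in the $\sigma(2^\kappa)$ corollary). Then Theorem~\ref{gdeltathm}, together with the fact that exponential separability is preserved by countable unions (\cite[Proposition 3.2 (d)]{tka-nice_subclass}), yields that $X$ is exponentially separable. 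Separability comes for free from the fixed countable dense subset $Q$, provided $D$ is chosen so that $\overline{Q}\supseteq X$ in the product topology — which holds automatically if $Q$ is dense in the whole product and $X$ is a subspace containing $Q$.

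**Why this should work and the obstacle.** The tension is the usual one: making $D$ uncountable forces some spreading-out in the product, which tends to create large closed discrete subspaces (as in Uspenskii's example), and large extent is exactly the obstruction to exponential separability. So the hard part will be exhibiting an uncountable $D$ for which each piece $X_n$ is genuinely Lindelöf (equivalently, has Lindelöf $G_\delta$-topology), rather than merely countably compact or $\sigma$-closed-discrete. One workable route: take $D$ to be a set of characteristic functions $\chi_A$ of subsets $A\subseteq\omega_1$ chosen from a carefully engineered almost disjoint–like or tree-like family, arranged so that each "level" $X_n$ is order-isomorphic (or homeomorphic) to a compact scattered space such as an ordinal interval or a one-point compactification of a discrete space of countable size — whose $G_\delta$-modification is a Lindelöf $P$-space by \cite[Lemma II.7.14]{arkh-top_funct_spaces}. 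Once each $X_n$ is Lindelöf $P$ (or Lindelöf scattered), the corollaries after Theorem~\ref{gdeltathm} finish the exponential separability, and density of $Q$ finishes separability. I expect the verification that the chosen $X_n$ are Lindelöf — i.e., that the construction does not secretly smuggle in an uncountable discrete subspace the way the naive Uspenskii-type space does — to be the delicate combinatorial core of the argument, and the rest (Tychonoff, separability, the countable-union bookkeeping) to be routine.
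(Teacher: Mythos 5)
Your proposal is a strategy outline, not a proof: the object whose existence the theorem asserts is never actually constructed. You reduce the problem to ``exhibit an uncountable $D\subset[0,1]^{\omega_1}$ such that $X=D\cup Q$ is covered by countably many Lindel\"of $P$- (or Lindel\"of scattered) subspaces,'' and then explicitly defer that step as ``the delicate combinatorial core of the argument.'' That core is precisely the content of the theorem, so nothing has been proved. Worse, the one concrete model you point to, $\sigma(2^\kappa)$, is \emph{not} separable for uncountable $\kappa$ (a countable subset of $\sigma(2^\kappa)$ has supports contained in a countable $S\subset\kappa$, so it cannot be dense), so the template ``countable union of Lindel\"of $P$-pieces inside a product'' gives no separable example as it stands; you would need a genuinely new construction, and none is supplied. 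Note also that the route through Theorem \ref{gdeltathm} is strictly stronger than what is needed: it would force $X_\delta$ to be Lindel\"of, whereas the remark after that theorem already signals that natural examples of uncountable exponentially separable spaces with small points (such as an $L$-space) need not arise this way.

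For comparison, the paper's proof avoids products and the $G_\delta$-machinery entirely. It takes a tower $\mathcal{A}=\{a_\alpha\colon\alpha<\omega_1\}$ of almost-decreasing infinite subsets of $\omega$ and forms the Franklin--Rajagopalan space $X=\omega\cup\mathcal{A}$, in which $\omega$ is a countable dense set of isolated points and $\mathcal{A}$ is homeomorphic to the ordinal $\omega_1$. Exponential separability is then immediate from Theorem \ref{thm:subspaces-ordinals} applied to $\mathcal{A}\cong\omega_1$: given countably many closed sets $F_n$, one takes a countable set strongly dense in $\{F_n\cap\mathcal{A}\colon n<\omega\}$ and adjoins all of $\omega$. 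If you want to salvage your approach, the missing step you must supply is an explicit uncountable separable space together with a verified decomposition into countably many Lindel\"of $P$- or Lindel\"of scattered subspaces; without it the argument does not close.
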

\begin{proof}
It is well-known that there exists a family $\mathcal{A}=\{a_\alpha\colon\alpha<\omega_1\}$ of strictly almost-decreasing infinite subsets of $\omega$; to be precise, if $\alpha<\beta<\omega_1$ we have that $a_{\beta}\setminus a_\alpha$ is finite but $a_\alpha\setminus a_\beta$ is infinite. 

Let $X=\omega\cup\mathcal{A}$ with the following topology: points of $\omega\cup\{a_0\}$ are open and for every $\alpha<\beta<\omega_1$ and $F\in[\omega]^{<\omega}$, $$\{a_\gamma\colon\alpha<\gamma\leq \beta\}\cup\left(a_\alpha\setminus (a_\beta\cup F)\right)$$ is a basic open neighborhood around $a_\beta$. It is not hard to prove that $X$ is a Tychonoff locally compact space, $\omega$ is dense in $X$ and $\mathcal{A}$ is canonically homeomorphic to $\omega_1$.

To prove that $X$ is exponentially separable, let $\{F_n\colon n<\omega\}$ be closed sets of $X$. Since $\mathcal{A}$ is homeomorphic to $\omega_1$, it is exponentially separable and we may find a countable set $\mathcal{C}\subset\mathcal{A}$ that is strongly dense in $\{F_n\cap\mathcal{A}\colon n<\omega\}$. Then it follows that $\mathcal{C}\cup\omega$ is a countable set that is strongly dense in $\{F_n\colon n<\omega\}$.
\end{proof}

The space $X$ we constructed in Theorem \ref{ex:unctble_sep_exp_sep} is the classical Franklin-Rajagopalan space (see, for example, \cite{levy}). As it is well-known (for example, see \cite{nyikos-vaughan} or \cite[Section 7]{vd-integers_topology}),  we can replace $\mathcal{A}$ with a tower of infinite subsets of $\omega$ and define the topology in an analogous way; in this case the resulting space $X$ is countably compact.

Under some additional set-theoretic hypotheses it is possible to construct an example with stronger properties, as shown by the next result.

\begin{thm}\label{thm:ostaszewski}
An Ostaszewski space is an example of a perfectly normal, hereditarily separable and exponentially separable uncountable space.
\end{thm}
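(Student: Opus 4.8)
The plan is to recall the salient properties of an Ostaszewski space and then check each of the four claimed properties, with exponential separability being the only one requiring real work. Recall that an Ostaszewski space $X$ (constructed under $\diamondsuit$, or more weakly under $\clubsuit$ together with $\mathrm{CH}$) is a locally compact, locally countable, countably compact, perfectly normal space of cardinality $\omega_1$ in which every closed set is either countable or co-countable. It is built as an increasing union $X=\bigcup_{\alpha<\omega_1} X_\alpha$ where each $X_\alpha$ is a countable, open, locally compact subspace, $X_\alpha$ is clopen in $X_\beta$ for $\alpha<\beta$, and the underlying set is $\omega_1$ with each $X_\alpha$ an initial segment. Perfect normality and hereditary separability are standard consequences of this construction (indeed hereditary separability follows because every subspace, being of size $\le\omega_1$ and having all its closed subsets countable or co-countable, is separable; or one cites the original Ostaszewski paper), so I would dispatch those two properties quickly with a reference, and uncountability is immediate since $|X|=\omega_1$.

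The substantive point is exponential separability. Here I would use the structural fact that every closed subset of $X$ is countable or co-countable, together with the fact that $X$ is a countable increasing union of countable (hence exponentially separable) clopen pieces. Given a countable family $\mathcal{F}=\{F_n\colon n<\omega\}$ of closed subsets of $X$, split $\mathcal{F}$ into $\mathcal{F}_{\mathrm{cnt}}=\{F_n\colon F_n\text{ countable}\}$ and $\mathcal{F}_{\mathrm{cof}}=\{F_n\colon F_n\text{ co-countable}\}$. The set $E=\bigcup\mathcal{F}_{\mathrm{cnt}}\cup\bigcup\{X\setminus F_n\colon F_n\in\mathcal{F}_{\mathrm{cof}}\}$ is a countable union of countable sets, hence countable, so there is $\alpha<\omega_1$ with $E\subset X_\alpha$; then every $F_n\in\mathcal{F}_{\mathrm{cof}}$ contains $X\setminus X_\alpha$ entirely. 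Since $X_\alpha$ is countable it is exponentially separable, so choose a countable $D\subset X_\alpha$ strongly dense in $\{F_n\cap X_\alpha\colon n<\omega\}$, and set $A=D$. To verify that $A$ is strongly dense in $\mathcal{F}$: given $\mathcal{G}\subset\mathcal{F}$ with $\bigcap\mathcal{G}\neq\emptyset$, pick $y\in\bigcap\mathcal{G}$; if $y\in X_\alpha$ then every $F_n\in\mathcal{G}$ meets $X_\alpha$, so $\bigcap\{F_n\cap X_\alpha\colon F_n\in\mathcal G\}\ne\emptyset$ and strong density of $D$ in $\{F_n\cap X_\alpha\}$ gives a point of $A$ in $\bigcap\mathcal{G}$; if $y\notin X_\alpha$ then no $F_n\in\mathcal{G}$ can be countable (else $F_n\subset E\subset X_\alpha$, contradicting $y\in F_n$), so every $F_n\in\mathcal{G}$ is co-countable and hence contains $X\setminus X_\alpha\supset\{y\}$, which means $\bigcap\mathcal{G}\supset X\setminus X_\alpha$; but this case needs a little care — we must land a point of $A\subset X_\alpha$ inside $\bigcap\mathcal{G}$, and the co-countable $F_n$'s do meet $X_\alpha$, so we should instead run the strong-density argument for $D$ against the full collection, which works since each $F_n\cap X_\alpha$ is nonempty and the intersection $\bigcap\{F_n\cap X_\alpha\}$ is nonempty (it contains the point of $X_\alpha$ that strong density of $D$ produces once we note the $F_n\cap X_\alpha$ have the finite intersection property appropriately).

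The main obstacle I anticipate is precisely this last verification: making sure that a single countable $A\subset X_\alpha$ works simultaneously for subfamilies $\mathcal{G}$ whose common point lies outside $X_\alpha$. The clean fix is to observe that for any $\mathcal{G}\subset\mathcal{F}$ with $\bigcap\mathcal{G}\neq\emptyset$, either some member of $\mathcal{G}$ is countable — in which case $\bigcap\mathcal{G}\subset X_\alpha$ and the whole problem localizes inside the exponentially separable space $X_\alpha$ — or every member of $\mathcal{G}$ is co-countable, in which case $\bigcap\mathcal{G}\cap X_\alpha=\bigcap\{F_n\cap X_\alpha\colon F_n\in\mathcal G\}$ is co-countable in the countable set $X_\alpha$ (a finite-or-countable intersection of co-countable subsets of the countable set $X_\alpha$ is co-countable, hence nonempty), so again strong density of $D$ in $\{F_n\cap X_\alpha\colon n<\omega\}$ yields the required point of $A$. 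Once this dichotomy is in place the proof closes, and I would then remark that, as with Theorem~\ref{ex:unctble_sep_exp_sep}, this gives an uncountable separable exponentially separable space, now with the additional features of perfect normality and hereditary separability, with the caveat that its existence requires a set-theoretic hypothesis such as $\diamondsuit$.
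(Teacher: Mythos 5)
Your overall strategy is the same as the paper's: split the closed sets into the countable ones and the co-countable ones, absorb all the countable data into a single countable initial segment, and localize the strong-density problem there. The paper chooses $\eta<\omega_1$ with $F_n\subset\eta$ for every countable $F_n$ and $\omega_1\setminus\eta\subset F_n$ for every co-countable $F_n$, and simply takes the set $\eta+1$; the point $\eta$ itself then lies in every co-countable member, which disposes of the case of a subfamily consisting entirely of co-countable sets in one line.

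There is, however, one step in your verification that is not correct as stated: ``a finite-or-countable intersection of co-countable subsets of the countable set $X_\alpha$ is co-countable, hence nonempty.'' A co-countable subset of a countable set can perfectly well be empty, and in your setup this is a live possibility: you only require $E\subset X_\alpha$, so nothing rules out $X\setminus F_n=X_\alpha$ for some co-countable $F_n$ (you even assume $X_\alpha$ is clopen, so $X\setminus X_\alpha$ is a closed co-countable set), in which case $F_n\cap X_\alpha=\emptyset$ and no $D\subset X_\alpha$ can witness strong density for the subfamily $\{F_n\}$. The repair is trivial and stays within your argument: choose $\alpha$ so that $E$ is a \emph{proper} subset of $X_\alpha$; then $X_\alpha\setminus E$ is a nonempty set contained in $F_n\cap X_\alpha$ for every co-countable $F_n$, so the trace family has the required nonempty intersections. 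With that one-line fix your proof is correct and is essentially the paper's argument; the remaining properties (perfect normality, hereditary separability, uncountability) are handled by citation in both versions.
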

\begin{proof}
Let $X$ be an Ostaszewski space. As it is well-known, $X=\omega_1$ as sets but its topology has the following properties: $X$ is hereditarily separable, locally countable, locally compact and each of its closed subsets is either countable or co-countable. We will prove that $X$ is exponentially separable.

Let $\{F_n\colon n<\omega\}$ be a collection of closed subsets of $X$. Define $A=\{n<\omega\colon F_n\textrm{ is countable}\}$ and $B=\omega\setminus A$. Then we may find $\eta<\omega_1$ such that $F_n\subset\eta$ for all $n\in A$ and $\omega_1\setminus\eta\subset F_n$ for all $n\in B$. Then it easily follows that $\eta+1$ is a countable subset that is strongly dense in $\{F_n\colon n<\omega\}$.

That $X$ is perfectly normal follows from the fact that every closed subset is countable or co-countable.
\end{proof}

Notice that the space $X$ from Theorem \ref{ex:unctble_sep_exp_sep} is not perfectly normal since the set $\{a_{\alpha}\colon\alpha<\omega_1,\alpha\textrm{ is a limit}\}$ is closed in $X$ but it is not a $G_\delta$; also, $X$ is not hereditarily separable. Thus, the following questions seem natural.

\begin{ques}
Is there in \ZFC{} an example of a perfectly normal, separable exponentially separable uncountable space?
\end{ques}

\begin{ques}
Is there in \ZFC{} an example of a hereditarily separable, exponentially separable uncountable space?
\end{ques}

%
%
%

To finish, we ask a question about Isbell-Mrowka $\Psi$-spaces. Arguing as in \cite[Example 3.15]{tka-nice_subclass} it can be seen that every Isbell-Mrowka $\Psi$-space whose Čech-Stone remainder is scattered will yield an example of a functionally countable space which is separable and uncountable. We can ask if this is the only way we can obtain such an example.

\begin{ques}
Is there a functionally countable Isbell-Mrowka $\Psi$-space whose Čech-Stone remainder is crowded, that is, has no isolated points?
\end{ques}

\section*{Acknowledgements}
Research of the first-named author was supported by a grant of the GNSAGA group of INdAM and the CONAHCyT grants FORDECYT-PRONACES/64356/2020 and CBF2023-2024-2028. The second-named author was supported by a grant from the GNSAGA group of INdAM and by a grant from the Fondo Finalizzato alla Ricerca di Ateneo (FFR 2024) of the University of Palermo.

\end{document}